\providecommand{\U}[1]{\protect\rule{.1in}{.1in}}
\newtheorem{theorem}{Theorem}
\newtheorem{acknowledgement}[theorem]{Acknowledgement}
\newtheorem{definition}[theorem]{Definition}
\newtheorem{lemma}[theorem]{Lemma}
\newtheorem{remark}[theorem]{Remark}
\newenvironment{proof}[1][Proof]{\noindent\textbf{#1.} }{\ \rule{0.5em}{0.5em}}
\begin{document}

\title{Continuity equation in LlogL for the 2D Euler equations under the enstrophy measure}
\author{Giuseppe Da Prato\thanks{Scuola Normale Superiore of Pisa, Italy}, Franco
Flandoli\thanks{Scuola Normale Superiore of Pisa, Italy}, Michael
R\"{o}ckner\thanks{University of Bielefeld, Germany}}
\maketitle

\begin{abstract}
The 2D Euler equations with random initial condition has been investigates by
S. Albeverio and A.-B. Cruzeiro in \cite{AlbCruz} and other authors. Here we
prove existence of solutions for the associated continuity equation in Hilbert
spaces, in a quite general class with LlogL densities with respect to the
enstrophy measure.

\end{abstract}

\section{Introduction}

We consider the 2D Euler equations on the torus $\mathbb{T}^{2}=\mathbb{R}%
^{2}/\mathbb{Z}^{2}$, formulated in terms of the vorticity $\omega$%
\begin{equation}
\partial_{t}\omega+u\cdot\nabla\omega=0 \label{Euler}%
\end{equation}
where $u$ is the velocity, divergence free vector field such that
$\omega=\partial_{2}u_{1}-\partial_{1}u_{2}$. We consider this equation in the
following abstract Wiener space structure. We set $H=L^{2}\left(
\mathbb{T}^{2}\right)  $ with scalar product $\left\langle \cdot
,\cdot\right\rangle _{H}$ and norm $\left\Vert \cdot\right\Vert _{H}$. Given
$\delta>0$, we consider the negative order Sobolev space $B:=H^{-1-\delta
}\left(  \mathbb{T}^{2}\right)  $, its dual $B^{\ast}=H^{1+\delta}\left(
\mathbb{T}^{2}\right)  $, and we write $\left\langle \cdot,\cdot\right\rangle
$ for the dual pairing between elements of $B$ and $B^{\ast}$. More generally,
we shall use the notation $\left\langle \cdot,\cdot\right\rangle $ also for
the dual pairing between elements of $C^{\infty}\left(  \mathbb{T}^{2}\right)
^{\prime}$ and $C^{\infty}\left(  \mathbb{T}^{2}\right)  $; in all cases
$\left\langle \cdot,\cdot\right\rangle $ reduces to $\left\langle \cdot
,\cdot\right\rangle _{H}$ when both elements are in $H$. Let $\mu$ be the so
called "enstrophy measure", the centered Gaussian measure on $B$ (in fact it
is supported on $H^{-1-}\left(  \mathbb{T}^{2}\right)  =\cap_{\delta
>0}H^{-1-\delta}\left(  \mathbb{T}^{2}\right)  $; but not on $H^{-1}\left(
\mathbb{T}^{2}\right)  $) such that
\[
\int_{B}\left\langle \omega,\phi\right\rangle \left\langle \omega
,\psi\right\rangle \mu\left(  d\omega\right)  =\left\langle \phi
,\psi\right\rangle _{H}%
\]
for all $\phi,\psi\in C^{\infty}\left(  \mathbb{T}^{2}\right)  $. Equation
(\ref{Euler}) has been investigated in this framework and it has been proved
that, with a suitable interpretation of the nonlinear term of the equation, it
has a (possibly non unique) solution for $\mu$-almost every initial condition
in $B$. Moreover, on a suitable probability space $\left(  \Xi,\mathcal{F}%
,\mathbb{P}\right)  $, there exists a stationary process with continuous
trajectories in $B$, with marginal law $\mu$ at every time $t$ (in this sense
we could say that $\mu$ is invariant for equation (\ref{Euler}); see also the
infinitesimal invariance \cite{AlbFar}), whose trajectories are solutions of
equation (\ref{Euler}) in that suitable specified sense. These results have
been proved first by Albeverio and Cruzeiro in \cite{AlbCruz} and proved with
a different concept of solution (used below) in \cite{Flandoli}.

We want to study the \textit{continuity equation}, associated to equation
(\ref{Euler}), for a density $\rho_{t}\left(  \omega\right)  $ with respect to
$\mu$. Let us introduce the notation%
\[
b\left(  \omega\right)  =-u\left(  \omega\right)  \cdot\nabla\omega
\]
for the drift in equation (\ref{Euler}), where we stress by writing $u\left(
\omega\right)  $ the fact that $u$ depends on $\omega$. The precise meaning of
$b\left(  \omega\right)  $ is a nontrivial problem discussed below;\ for the
time being, let us take it as an heuristic notation. Let $\mathcal{FC}%
_{b,T}^{1}$ be the set of all functionals $F:\left[  0,T\right]  \times
C^{\infty}\left(  \mathbb{T}^{2}\right)  ^{\prime}\rightarrow\mathbb{R}$ of
the form $F\left(  t,\omega\right)  =\sum_{i=1}^{m}\widetilde{f}_{i}\left(
\left\langle \omega,\phi_{1}\right\rangle ,...,\left\langle \omega,\phi
_{n}\right\rangle \right)  g_{i}\left(  t\right)  $, with $\phi_{1}%
,...,\phi_{n}\in C^{\infty}\left(  \mathbb{T}^{2}\right)  $, $\widetilde{f}%
_{i}\in C_{b}^{1}\left(  \mathbb{R}^{n}\right)  $, $g_{i}\in C^{1}\left(
\left[  0,T\right]  \right)  $ with $g_{i}\left(  T\right)  =0$. The weak form
of the continuity equation is%
\begin{equation}
\int_{0}^{T}\int_{B}\left(  \partial_{t}F\left(  t,\omega\right)
+\left\langle b\left(  \omega\right)  ,DF\left(  t,\omega\right)
\right\rangle \right)  \rho_{t}\left(  \omega\right)  \mu\left(
d\omega\right)  dt=-\int_{B}F\left(  0,\omega\right)  \rho_{0}\left(
\omega\right)  \mu\left(  d\omega\right)  . \label{cont eq weak form}%
\end{equation}
The most critical term, which requires a careful definition, is $\left\langle
b\left(  \omega\right)  ,DF\left(  t,\omega\right)  \right\rangle $. Let us
discuss this issue.

When $F\left(  t,\omega\right)  =\sum_{i=1}^{m}\widetilde{f}_{i}\left(
\left\langle \omega,\phi_{1}\right\rangle ,...,\left\langle \omega,\phi
_{n}\right\rangle \right)  g_{i}\left(  t\right)  $ as above, given any
element $\eta\in C^{\infty}\left(  \mathbb{T}^{2}\right)  ^{\prime}$ the
limit
\[
\lim_{\epsilon\rightarrow0}\epsilon^{-1}\left(  F\left(  t,\omega+\epsilon
\eta\right)  -F\left(  t,\omega\right)  \right)
\]
exists for every $\left(  t,\omega\right)  \in\left[  0,T\right]  \times
C^{\infty}\left(  \mathbb{T}^{2}\right)  ^{\prime}$ and it is equal to
\[
\sum_{i=1}^{m}\sum_{j=1}^{n}\partial_{j}\widetilde{f}_{i}\left(  \left\langle
\omega_{t},\phi_{1}\right\rangle ,...,\left\langle \omega_{t},\phi
_{n}\right\rangle \right)  g_{i}\left(  t\right)  \left\langle \eta,\phi
_{j}\right\rangle .
\]
Assume we have defined $\left\langle b\left(  \omega\right)  ,\phi
\right\rangle $ when $\omega$ is a typical element under $\mu$ and $\phi\in
C^{\infty}\left(  \mathbb{T}^{2}\right)  $. Then we set%
\begin{equation}
\left\langle b\left(  \omega\right)  ,DF\left(  t,\omega\right)  \right\rangle
:=\sum_{i=1}^{m}\sum_{j=1}^{n}\partial_{j}\widetilde{f}_{i}\left(
\left\langle \omega_{t},\phi_{1}\right\rangle ,...,\left\langle \omega
_{t},\phi_{n}\right\rangle \right)  g_{i}\left(  t\right)  \left\langle
b\left(  \omega\right)  ,\phi_{j}\right\rangle . \label{def b}%
\end{equation}
To complete the meaning of $\left\langle b\left(  \omega\right)  ,DF\left(
t,\omega\right)  \right\rangle $ we thus have to give a meaning to
$\left\langle b\left(  \omega\right)  ,\phi\right\rangle $ for every $\phi\in
C^{\infty}\left(  \mathbb{T}^{2}\right)  $. Formally
\[
\left\langle b\left(  \omega\right)  ,\phi\right\rangle =-\left\langle
u\left(  \omega\right)  \cdot\nabla\omega,\phi\right\rangle .
\]
In Theorem \ref{Thm Cauchy} of Section \ref{section def nonlinear term} we
shall define (for each $\phi\in C^{\infty}\left(  \mathbb{T}^{2}\right)  $) a
random variable $\omega\mapsto\left\langle b\left(  \omega\right)
,\phi\right\rangle $ on the space $\left(  B,\mathcal{B},\mu\right)  $
($\mathcal{B}$ being the Borel $\sigma$-field on $B$). With this definition,
identity (\ref{def b}) provides a rigorous definition of the measurable map
$\left(  \omega,t\right)  \mapsto\left\langle b\left(  \omega\right)
,DF\left(  t,\omega\right)  \right\rangle $, with certain integrability
properties in $\omega$ coming from the results of Section
\ref{section def nonlinear term}.

\begin{remark}
To help the intuition, let us heuristically write equation
(\ref{cont eq weak form}) in the form%
\begin{equation}
\partial_{t}\rho_{t}+\operatorname{div}_{\mu}\left(  \rho_{t}b\right)  =0
\label{cont eq}%
\end{equation}
with initial condition $\rho_{0}\left(  \omega\right)  $, where
$\operatorname{div}_{\mu}\left(  v\right)  $, when defined, for a vector field
$v$ on $B$, is (heuristically) defined by the identity%
\begin{equation}
\int_{B}F\left(  \omega\right)  \operatorname{div}_{\mu}\left(  v\left(
\omega\right)  \right)  \mu\left(  d\omega\right)  =-\int_{B}\left\langle
v\left(  \omega\right)  ,DF\left(  \omega\right)  \right\rangle \mu\left(
d\omega\right)  \label{def div mu}%
\end{equation}
for all $F\in\mathcal{FC}_{b}^{1}$, where $\mathcal{FC}_{b}^{1}$ is defined as
$\mathcal{FC}_{b,T}^{1}$ but without the time-dependent components $g_{i}$.
\end{remark}

In \cite{Flandoli} it is proved that the random variable $\omega
\mapsto\left\langle b\left(  \omega\right)  ,\phi\right\rangle $ on $\left(
B,\mathcal{B},\mu\right)  $ has all finite moments;\ here we improve the
result and show that it is exponentially integrable: given $\phi\in C^{\infty
}\left(  \mathbb{T}^{2}\right)  $, it holds
\begin{equation}
\int_{B}e^{\epsilon\left\vert \left\langle b\left(  \omega\right)
,\phi\right\rangle \right\vert }\mu\left(  d\omega\right)  <\infty
\label{exp integr}%
\end{equation}
for some $\epsilon>0$, which depends only on $\left\Vert \phi\right\Vert
_{\infty}$;\ see Theorem \ref{thm exponential est} in Section
\ref{section def nonlinear term} below.

This exponential integrability is a key ingredient to extend, to the 2D Euler
equations, the result of the authors \cite{DaPratoRoeckner} for abstract
equations in Hilbert spaces (in that work the measure $\mu$ is not necessarily
Gaussian, but the nonlinearity is bounded). Indeed, we aim to prove existence
in the class of densities $\rho_{t}\left(  \omega\right)  $ such that
\begin{equation}
\sup_{t\in\left[  0,T\right]  }\int_{B}\rho_{t}\left(  \omega\right)  \log
\rho_{t}\left(  \omega\right)  \mu\left(  d\omega\right)  <\infty.
\label{regularity rho}%
\end{equation}
Since $ab\leq e^{\epsilon a}+\epsilon^{-1}b\left(  \log\epsilon^{-1}%
b-1\right)  $, if $\rho_{t}\left(  \omega\right)  $ satisfies
(\ref{regularity rho}) and property (\ref{exp integr}) is proved, then
\[
\int_{B}\left\langle b\left(  \omega\right)  ,DF\left(  t,\omega\right)
\right\rangle \rho_{t}\left(  \omega\right)  \mu\left(  d\omega\right)
\]
is well defined. With these preliminaries we can give the following definition.

\begin{definition}
Given a measurable function $\rho_{0}:B\rightarrow\lbrack0,\infty)$ such that
$\int_{B}\rho_{0}\left(  \omega\right)  \log\rho_{0}\left(  \omega\right)
\mu\left(  d\omega\right)  <\infty$, we say that a measurable function
$\rho:\left[  0,T\right]  \times B\rightarrow\lbrack0,\infty)$ is a solution
of equation (\ref{cont eq}) of class LlogL if property (\ref{regularity rho})
is satisfied and identity (\ref{cont eq weak form}) holds for every
$F\in\mathcal{FC}_{b,T}^{1}$.
\end{definition}

Our main result, proved in Section \ref{section proof main theorem}, is:

\begin{theorem}
\label{main thm}If
\[
\int_{B}\rho_{0}\left(  \omega\right)  \log\rho_{0}\left(  \omega\right)
\mu\left(  d\omega\right)  <\infty
\]
then there exists a solution of equation (\ref{cont eq}) of class LlogL.
\end{theorem}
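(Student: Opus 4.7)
The strategy mirrors \cite{DaPratoRoeckner}: construct approximate solutions by Galerkin truncation, use invariance of the enstrophy measure under the truncated flow to derive a uniform LlogL bound, and pass to the limit via weak $L^1(\mu)$ compactness combined with the exponential integrability \eqref{exp integr} to handle the nonlinear term.

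For the Galerkin step, let $\pi_n$ denote the orthogonal projection in $H$ onto the span of the first $n$ Fourier modes and set $b_n(\omega) := -\pi_n(u(\pi_n\omega)\cdot\nabla\pi_n\omega)$, a smooth polynomial vector field on the finite-dimensional space $\pi_n B$. The truncated Euler system preserves enstrophy and is divergence-free in Fourier coordinates, hence its flow $\Phi^n_t$ leaves the Gaussian marginal $\mu_n := (\pi_n)_{\ast}\mu$ invariant. Taking $\rho^n_0 := \mathbb{E}_\mu[\rho_0\mid\sigma(\pi_n)]$ and $\rho^n_t(\omega) := \rho^n_0(\Phi^n_{-t}\pi_n\omega)$ yields a classical solution of the truncated continuity equation, and invariance of $\mu_n$ together with Jensen's inequality for $r\log r$ gives the uniform bound
\[
\sup_{n}\sup_{t\in[0,T]}\int_B \rho^n_t\log\rho^n_t\, d\mu \le \int_B \rho_0\log\rho_0\, d\mu.
\]
By de la Vall\'ee Poussin the family $\{\rho^n_t\}$ is uniformly integrable in $L^1(dt\otimes d\mu)$; a subsequence converges weakly to some $\rho_t$, and the LlogL bound passes to the limit by lower semicontinuity of the convex entropy.

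The passage to the limit in \eqref{cont eq weak form} is routine on the linear terms; the only delicate piece is $\int_0^T\!\!\int_B \rho^n_t\,\langle b_n, DF\rangle\, d\mu\, dt$. By Theorem \ref{Thm Cauchy} one has $\langle b_n(\cdot),\phi_j\rangle \to \langle b(\cdot),\phi_j\rangle$ in $L^p(\mu)$ for every $p<\infty$, and a uniform-in-$n$ version of Theorem \ref{thm exponential est} gives $\int_B e^{\epsilon|\langle b_n,\phi_j\rangle|}d\mu \le C$ with $\epsilon,C$ independent of $n$. This promotes the convergence to strong convergence in the Orlicz space generated by $e^x-1$, which is dual to LlogL. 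The Young inequality $ab\le e^{\epsilon a} + \epsilon^{-1} b(\log\epsilon^{-1} b - 1)$ recalled in the introduction, combined with the weak $L^1$ convergence of $\rho^n_t$ and the uniform LlogL bound, then closes the limit.

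The main obstacle is precisely this last duality step: weak $L^1$ convergence of $\rho^n_t$ cannot be paired against mere $L^p$-strong convergence of $\langle b_n, DF\rangle$, so one must establish the uniform-in-$n$ exponential integrability of $\langle b_n,\phi\rangle$. This should follow by replicating the Wiener-chaos / renormalized-product estimates underlying Theorem \ref{thm exponential est} at the Galerkin level, where every object is a finite polynomial in a Gaussian vector and the constants can be tracked explicitly in terms of $\|\phi\|_\infty$ only.
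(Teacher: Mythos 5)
Your overall architecture (Galerkin truncation, invariance of $\mu^{N}$ via $\operatorname{div}_{\mu}b_{N}=0$, uniform entropy bound, entropy/Orlicz duality in the limit) matches the paper's, but you collapse the paper's two separate limits into a single one, and this creates a genuine gap exactly at the point you yourself flag. To pair the weakly convergent densities $\rho^{n}_{t}$ against $\left\langle b_{n},DF\right\rangle$ you need exponential integrability of $\left\langle b_{n},\phi\right\rangle$ \emph{uniformly in} $n$, and you assert this ``should follow by replicating'' Theorem \ref{thm exponential est}. It does not: that theorem rests on Lemma \ref{lemma estimates}, whose moment bounds are expressed through $\left\Vert f\right\Vert_{\infty}$, and for $N$ large one has $\left\langle b_{N}\left(\omega\right),\phi\right\rangle=\left\langle \omega\otimes\omega,\left(H_{\phi}\right)_{N}\right\rangle$ with $\left(H_{\phi}\right)_{N}$ the double Fourier (Dirichlet) truncation of $H_{\phi}$; such truncations do not preserve the sup norm uniformly (the Lebesgue constants of $\theta_{N}$ grow logarithmically), so $\left\Vert \left(H_{\phi}\right)_{N}\right\Vert_{\infty}$ is not uniformly bounded and the paper's estimate does not transfer. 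A uniform exponential bound is plausibly true via operator-norm (Hanson--Wright type) estimates for second-chaos variables, but that is a different argument you would have to supply, and it also needs uniform control of the trace term $\int_{\mathbb{T}^{2}}\left(H_{\phi}\right)_{N}\left(x,x\right)dx$. This points to a second omission: your claim that $\left\langle b_{n},\phi\right\rangle\rightarrow\left\langle b,\phi\right\rangle$ in $L^{p}\left(\mu\right)$ ``by Theorem \ref{Thm Cauchy}'' presupposes that the truncated kernels satisfy its hypotheses, in particular $\int_{\mathbb{T}^{2}}\left(H_{\phi}\right)_{N}\left(x,x\right)dx\rightarrow0$; this is not automatic (crude bounds give logarithmic growth) and is precisely the content of the paper's final lemma, which uses the symmetries of $W_{N}=\theta_{N}\ast\theta_{N}$ and the decomposition (\ref{decomposition H}) to cancel the singular part and kill the Lipschitz remainder. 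Finally, ``strong convergence in the Orlicz space generated by $e^{x}-1$'' does not follow from $L^{p}$ convergence plus a uniform exponential moment at one fixed $\epsilon$: you only control $\int_{B}e^{s\left\vert X_{n}-X\right\vert}d\mu$ for small $s$, which must then be combined with uniform integrability of $\rho^{n}_{t}$ through a truncation argument rather than quoted as norm convergence in the dual space.

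The paper avoids the uniform-in-$N$ exponential estimate altogether by running two limits in sequence. First, for bounded continuous $\rho_{0}$, the approximate densities $\rho^{N}_{t}$ are uniformly bounded in $L^{\infty}$, so weak-star $L^{\infty}$ convergence suffices and only strong $L^{1}\left(\mu\right)$ convergence of $\left\langle b_{N}\left(\cdot\right),\phi\right\rangle$ to $\left\langle b\left(\cdot\right),\phi\right\rangle$ is needed, which follows from Lemma \ref{lemma estimates}, Theorem \ref{Thm Cauchy} and the Dirichlet-kernel symmetry lemma. Second, a general LlogL datum is approximated by bounded continuous ones; at that stage the drift is the fixed limit $b$, so Theorem \ref{thm exponential est} exactly as stated (no uniformity in $N$ required) together with Orlicz duality closes the argument. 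If you wish to keep your one-step scheme you must prove the uniform exponential estimate for the truncated drift and verify the trace condition; otherwise, restructure along the paper's two-step lines.
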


\section{Definition and properties of $\left\langle b\left(  \omega\right)
,\phi\right\rangle $ \label{section def nonlinear term}}

We denote by $\left\{  e_{n}\right\}  $ the complete orthonormal system in
$L^{2}\left(  \mathbb{T}^{2};\mathbb{C}\right)  $ given by $e_{n}\left(
x\right)  =e^{2\pi in\cdot x}$, $n\in\mathbb{Z}^{2}$. As already said in the
Introduction, given a distribution $\omega\in C^{\infty}\left(  \mathbb{T}%
^{2}\right)  ^{\prime}$ and a test function $\phi\in C^{\infty}\left(
\mathbb{T}^{2}\right)  $, we denoted by $\left\langle \omega,\phi\right\rangle
$ the duality between $\omega$ and $\phi$ (namely $\omega\left(  \phi\right)
$), and we use the same symbol for the inner product of $L^{2}\left(
\mathbb{T}^{2}\right)  $. We set $\widehat{\omega}\left(  n\right)
=\left\langle \omega,e_{n}\right\rangle $, $n\in\mathbb{Z}^{2}$ and we define,
for each $s\in\mathbb{R}$, the space $H^{s}\left(  \mathbb{T}^{2}\right)  $ as
the space of all distributions $\omega\in C^{\infty}\left(  \mathbb{T}%
^{2}\right)  ^{\prime}$ such that
\[
\left\Vert \omega\right\Vert _{H^{s}}^{2}:=\sum_{n\in\mathbb{Z}^{2}}\left(
1+\left\vert n\right\vert ^{2}\right)  ^{s}\left\vert \widehat{\omega}\left(
n\right)  \right\vert ^{2}<\infty.
\]
We use similar definitions and notations for the space $H^{s}\left(
\mathbb{T}^{2},\mathbb{C}\right)  $ of complex valued functions.

We want to define, for every $\phi\in C^{\infty}\left(  \mathbb{T}^{2}\right)
$, the random variable%
\begin{align*}
\left\langle b\left(  \omega\right)  ,\phi\right\rangle  &  =-\left\langle
u\left(  \omega\right)  \cdot\nabla\omega,\phi\right\rangle =-\int%
_{\mathbb{T}^{2}}u\left(  \omega\right)  \left(  x\right)  \cdot\nabla
\omega\left(  x\right)  \phi\left(  x\right)  dx\\
&  =\int_{\mathbb{T}^{2}}\omega\left(  x\right)  u\left(  \omega\right)
\left(  x\right)  \cdot\nabla\phi\left(  x\right)  dx
\end{align*}
where we have used integration by parts and the condition $\operatorname{div}%
u=0$ (the computation is heuristic, or it holds for smooth periodic functions;
we are still looking for a meaningful definition). Recall that $u$ is
divergence free and associated to $\omega$ by $\omega=\partial_{2}%
u_{1}-\partial_{1}u_{2}$. This relation can be inverted using the so called
Biot-Savart law:%
\[
u\left(  x\right)  =\int_{\mathbb{T}^{2}}K\left(  x-y\right)  \omega\left(
y\right)  dy
\]
where $K\left(  x,y\right)  $ is the Biot-Savart kernel; in full space it is
given by $K\left(  x-y\right)  =\frac{1}{2\pi}\frac{\left(  x-y\right)
^{\perp}}{\left\vert x-y\right\vert ^{2}}$; on the torus its form is less
simple but we still have $K$ smooth for $x\neq y$, $K\left(  y-x\right)
=-K\left(  x-y\right)  $,
\[
\left\vert K\left(  x-y\right)  \right\vert \leq\frac{C}{\left\vert
x-y\right\vert }%
\]
for small values of $\left\vert x-y\right\vert $. See for instance
\cite{Shochet} for details.

The difficulty in the definition of $\left\langle b\left(  \omega\right)
,\phi\right\rangle $ is that $\omega$ is of class $H^{-1-\delta}\left(
\mathbb{T}^{2}\right)  $ and $u$ of class $H^{-\delta}\left(  \mathbb{T}%
^{2}\right)  $, so we need to multiply distributions. The following remark
recalls a trick used in several works on measure-valued solutions of 2D Euler
equations, like \cite{Delort}, \cite{DiPernaMajda}, \cite{Poup},
\cite{Shochet}, \cite{ShochetII}.

\begin{remark}
\label{remark weak vorticity formulation}If $\omega$ is sufficiently smooth
and periodic, using Biot-Savart law we can write%
\[
\left\langle b\left(  \omega\right)  ,\phi\right\rangle =\int_{\mathbb{T}^{2}%
}\int_{\mathbb{T}^{2}}\omega\left(  x\right)  \omega\left(  y\right)  K\left(
x-y\right)  \cdot\nabla\phi\left(  x\right)  dxdy.
\]
Since the double integral, when we rename $x$ by $y$ and $y$ by $x$, is the
same (the renaming doesn't affect the value), and $K\left(  y-x\right)
=-K\left(  x-y\right)  $, we get
\[
\left\langle b\left(  \omega\right)  ,\phi\right\rangle =\int_{\mathbb{T}^{2}%
}\int_{\mathbb{T}^{2}}\omega\left(  x\right)  \omega\left(  y\right)  H_{\phi
}\left(  x,y\right)  dxdy
\]
where%
\[
H_{\phi}\left(  x,y\right)  :=\frac{1}{2}K\left(  x-y\right)  \cdot\left(
\nabla\phi\left(  x\right)  -\nabla\phi\left(  y\right)  \right)  .
\]
The advantage of this symmetrization is that $H_{\phi}$ (opposite to $K\left(
x-y\right)  \cdot\nabla\phi\left(  x\right)  $) is a bounded function. It is
smooth outside the diagonal $x=y$, discontinuous on the diagonal; more
precisely, we can write%
\begin{equation}
H_{\phi}\left(  x,y\right)  =\frac{1}{2\pi}\left\langle D^{2}\phi\left(
x\right)  \frac{x-y}{\left\vert x-y\right\vert },\frac{\left(  x-y\right)
^{\perp}}{\left\vert x-y\right\vert }\right\rangle +R_{\phi}\left(
x,y\right)  \label{decomposition H}%
\end{equation}
where $R_{\phi}\left(  x,y\right)  $ is Lipschitz continuous, with
\[
\left\vert R_{\phi}\left(  x,y\right)  \right\vert \leq C\left\vert
x-y\right\vert .
\]
To summarize, when $\omega$ is sufficiently smooth and periodic, we have%
\[
\left\langle b\left(  \omega\right)  ,\phi\right\rangle =\left\langle
\omega\otimes\omega,H_{\phi}\right\rangle _{L^{2}\left(  \mathbb{T}^{2}%
\times\mathbb{T}^{2}\right)  }%
\]
where $\omega\otimes\omega:\mathbb{T}^{2}\times\mathbb{T}^{2}\rightarrow
\mathbb{R}$ is defined as $\left(  \omega\otimes\omega\right)  \left(
x,y\right)  =\omega\left(  x\right)  \omega\left(  y\right)  $.
\end{remark}

\begin{remark}
The previous expression is meaningful when $\omega$ is a measure, since
$H_{\phi}$ is Borel bounded. When $\omega$ is only a distribution, of class
$H^{-1-\delta}\left(  \mathbb{T}^{2}\right)  $, one can define $\omega
\otimes\omega$ as the unique element of $H^{-2-2\delta}\left(  \mathbb{T}%
^{2}\times\mathbb{T}^{2}\right)  $ such that
\[
\left\langle \omega\otimes\omega,f\right\rangle =\left\langle \omega
,\varphi\right\rangle \left\langle \omega,\psi\right\rangle
\]
for every smooth $f:\mathbb{T}^{2}\times\mathbb{T}^{2}\rightarrow\mathbb{R}$
of the form $f\left(  x,y\right)  =\varphi\left(  x\right)  \psi\left(
y\right)  $, where the dual pairing $\left\langle \omega\otimes\omega
,f\right\rangle $ is on $\mathbb{T}^{2}\times\mathbb{T}^{2}$. But $H_{\phi}$
is not of class $H^{2+2\delta}\left(  \mathbb{T}^{2}\times\mathbb{T}%
^{2}\right)  $, hence there is no simple deterministic meaning for
$\left\langle \omega\otimes\omega,H_{\phi}\right\rangle $ when $\omega\in
H^{-1-\delta}\left(  \mathbb{T}^{2}\right)  $. It is here that probability
will play the essential role.
\end{remark}

In \cite{Flandoli} the following result has been proved. As remarked above,
when $f\in H^{2+2\delta}\left(  \mathbb{T}^{2}\times\mathbb{T}^{2}\right)  $,
$\left\langle \omega\otimes\omega,f\right\rangle $ is well defined for all
$\omega\in H^{-1-\delta}\left(  \mathbb{T}^{2}\right)  $, hence for a.e.
$\omega$ with respect to the Entrophy measure $\mu$.

\begin{lemma}
\label{lemma estimates}Assume $f\in H^{2+\epsilon}\left(  \mathbb{T}^{2}%
\times\mathbb{T}^{2}\right)  $ for some $\epsilon>0$. One has%
\[
\int_{B}\left\vert \left\langle \omega\otimes\omega,f\right\rangle \right\vert
^{p}\mu\left(  d\omega\right)  \leq\frac{\left(  2p\right)  !}{2^{p}%
p!}\left\Vert f\right\Vert _{\infty}^{p}%
\]
for every positive integer $p\geq2$,%
\[
\int_{B}\left\langle \omega\otimes\omega,f\right\rangle \mu\left(
d\omega\right)  =\int_{\mathbb{T}^{2}}f\left(  x,x\right)  dx
\]
and, when $f$ is also symmetric,%
\[
\int_{B}\left\vert \left\langle \omega\otimes\omega,f\right\rangle
-\int_{\mathbb{T}^{2}}f\left(  x,x\right)  dx\right\vert ^{2}\mu\left(
d\omega\right)  =2\int_{\mathbb{T}^{2}}\int_{\mathbb{T}^{2}}f\left(
x,y\right)  ^{2}dxdy.
\]

\end{lemma}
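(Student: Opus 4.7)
The approach hinges on the fact that the enstrophy measure $\mu$ is the law of spatial white noise on $\mathbb{T}^{2}$: if $\{h_{k}\}_{k\ge 1}$ is any real orthonormal basis of $L^{2}(\mathbb{T}^{2})$ consisting of smooth functions, then under $\mu$ the coefficients $X_{k} := \langle \omega, h_{k}\rangle$ form an i.i.d.\ sequence of standard Gaussians. Since $H^{2+\epsilon}(\mathbb{T}^{2}\times\mathbb{T}^{2})\hookrightarrow C(\mathbb{T}^{2}\times\mathbb{T}^{2})$ by Sobolev embedding in dimension $4$, the coefficients $f_{kl} := \langle f, h_{k}\otimes h_{l}\rangle_{L^{2}}$ satisfy $\sum_{k,l}f_{kl}^{2}=\|f\|_{L^{2}}^{2}<\infty$ and the series
\[
\langle \omega\otimes\omega, f\rangle \;=\; \sum_{k,l} f_{kl}\,X_{k}X_{l}
\]
converges in $L^{2}(\mu)$, coinciding with the distributional definition from the Introduction. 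All three assertions then follow from computing Gaussian moments via Isserlis's (Wick's) formula.

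For the mean, $\mathbb{E}[X_{k}X_{l}]=\delta_{kl}$ gives $\mathbb{E}\langle\omega\otimes\omega,f\rangle = \sum_{k}f_{kk}$; since $f$ is continuous, a Fourier truncation argument identifies this series with $\int_{\mathbb{T}^{2}}f(x,x)\,dx$. For the variance with $f$ symmetric, Isserlis gives $\mathbb{E}[X_{k}X_{l}X_{k'}X_{l'}]=\delta_{kl}\delta_{k'l'}+\delta_{kk'}\delta_{ll'}+\delta_{kl'}\delta_{lk'}$, and subtracting the mean reduces the variance to $\sum_{k,l}(f_{kl}^{2}+f_{kl}f_{lk})$, which equals $2\|f\|_{L^{2}}^{2}$ when $f_{lk}=f_{kl}$.

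The main content is the $L^{p}$ bound, which I would treat first for even exponents $p=2m$. Expanding and applying Wick's formula gives
\[
\mathbb{E}\langle\omega\otimes\omega,f\rangle^{2m} \;=\; \sum_{\pi} \sum_{\vec{k},\vec{l}}\; \prod_{i=1}^{2m} f_{k_{i}l_{i}} \prod_{(a,b)\in\pi}\delta_{j_{a}j_{b}},
\]
where $(j_{1},\dots,j_{4m})=(k_{1},l_{1},\dots,k_{2m},l_{2m})$ and $\pi$ runs over the $(4m-1)!!=(4m)!/(2^{2m}(2m)!)$ pairings of $4m$ elements. After contracting the Kronecker deltas, each pairing collapses into a product $\operatorname{Tr}(F^{\ell_{1}})\cdots\operatorname{Tr}(F^{\ell_{r}})$, where $F$ is the integral operator with kernel $f$ and the $\ell_{j}$ are the cycle lengths in the $2$-regular multigraph on $4m$ vertices formed by the $2m$ $f$-pairs and the $2m$ $\pi$-pairs; in particular $\sum_{j}\ell_{j}=2m$. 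Since the torus has unit volume and $|f|\le\|f\|_{\infty}$ pointwise, $|\operatorname{Tr}(F^{\ell})|\le\|f\|_{\infty}^{\ell}$, so each pairing contributes at most $\|f\|_{\infty}^{2m}$, yielding the bound for even $p$. For odd $p\ge 3$ I would interpolate by Cauchy--Schwarz, $\mathbb{E}|X|^{p}\le(\mathbb{E}|X|^{p-1})^{1/2}(\mathbb{E}|X|^{p+1})^{1/2}$, combined with the elementary inequality $(2p-3)!!\,(2p+1)!!\le((2p-1)!!)^{2}$, which reduces to $(2p+1)(2p-3)\le(2p-1)^{2}$.

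The main obstacle I anticipate is the combinatorial step identifying each Wick pairing with a product of cycle-traces of $F$ and justifying the index contractions rigorously. Working directly in the coefficient form $\sum f_{kl}X_{k}X_{l}$ (rather than the formal kernel form $\int\!\!\int f(x,y)\omega(x)\omega(y)\,dx\,dy$) reduces the entire argument to finite-dimensional combinatorics, with the hypothesis $f\in H^{2+\epsilon}$ entering only through the Sobolev embedding that guarantees continuity of $f$ and absolute convergence of the sums involved.
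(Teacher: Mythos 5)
The paper itself gives no proof of this lemma (it is quoted from \cite{Flandoli}), so your argument has to stand on its own. The white--noise coordinate representation, the identification of the mean with $\int_{\mathbb{T}^{2}}f(x,x)\,dx$, the variance identity for symmetric $f$, and the even--moment bound are all sound: for $p=2m$ the Isserlis expansion has $(4m-1)!!=(4m)!/(2^{2m}(2m)!)$ pairings, each contracting to a product of cycle--traces bounded by $\Vert f\Vert_{\infty}^{2m}$ on the unit--volume torus, which is exactly the constant $\frac{(2p)!}{2^{p}p!}$ of the statement. One small caution: identifying $\sum_{k}f_{kk}$ with $\int f(x,x)\,dx$ (also needed for the length--one cycles) requires more than continuity of $f$; it follows here because $f\in H^{2+\epsilon}(\mathbb{T}^{2}\times\mathbb{T}^{2})$ gives $\sum_{n,m}\vert\widehat{f}(n,m)\vert<\infty$, which also justifies all the interchanges of sums you need.

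The genuine gap is the odd--$p$ step. The inequality $(2p-3)!!\,(2p+1)!!\le\bigl((2p-1)!!\bigr)^{2}$ is false: since $(2p+1)!!=(2p+1)(2p-1)!!$ and $(2p-1)!!=(2p-1)(2p-3)!!$, the left side equals $\frac{2p+1}{2p-1}\bigl((2p-1)!!\bigr)^{2}$, strictly larger (your reduction to $(2p+1)(2p-3)\le(2p-1)^{2}$ misplaces the factors). Hence Cauchy--Schwarz between neighbouring even moments only yields $\mathbb{E}\vert Z\vert^{p}\le\sqrt{\tfrac{2p+1}{2p-1}}\,\frac{(2p)!}{2^{p}p!}\Vert f\Vert_{\infty}^{p}$, not the stated bound. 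Moreover no lossy interpolation can close this: taking $f\equiv 1$ gives $Z=\langle\omega,1\rangle^{2}$ with $\mathbb{E}\vert Z\vert^{p}=\frac{(2p)!}{2^{p}p!}$ for every integer $p$, so the constant in the lemma is sharp also for odd $p$, and the Wick count bounds only $\vert\mathbb{E}[Z^{p}]\vert$, not $\mathbb{E}\vert Z\vert^{p}$, when $p$ is odd; a different argument (or an appeal to the proof in \cite{Flandoli}) is needed there. Note, however, that the weaker constant you do obtain exceeds the stated one by a factor at most $\sqrt{7/5}$, which leaves Theorem \ref{thm exponential est} and everything downstream unaffected, since the ratio test there is insensitive to such a bounded factor.
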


The consequence proved in \cite{Flandoli} is:

\begin{theorem}
\label{Thm Cauchy}Let $\omega:\Xi\rightarrow C^{\infty}\left(  \mathbb{T}%
^{2}\right)  ^{\prime}$ be a white noise and $\phi\in C^{\infty}\left(
\mathbb{T}^{2}\right)  $ be given. Assume that $H_{\phi}^{n}\in H^{2+}\left(
\mathbb{T}^{2}\times\mathbb{T}^{2}\right)  $ are symmetric and approximate
$H_{\phi}$ in the following sense:%
\begin{align*}
\lim_{n\rightarrow\infty}\int_{\mathbb{T}^{2}}\int_{\mathbb{T}^{2}}\left(
H_{\phi}^{n}-H_{\phi}\right)  ^{2}\left(  x,y\right)  dxdy  &  =0\\
\lim_{n\rightarrow\infty}\int_{\mathbb{T}^{2}}H_{\phi}^{n}\left(  x,x\right)
dx  &  =0.
\end{align*}
Then the sequence of r.v.'s $\left\langle \omega\otimes\omega,H_{\phi}%
^{n}\right\rangle $ is a Cauchy sequence in mean square. We denote by
\[
\left\langle b\left(  \omega\right)  ,\phi\right\rangle =\left\langle
\omega\otimes\omega,H_{\phi}\right\rangle
\]
its limit. Moreover, the limit is the same if $H_{\phi}^{n}$ is replaced by
$\widetilde{H}_{\phi}^{n}$ with the same properties and such that
$\lim_{n\rightarrow\infty}\int\int\left(  H_{\phi}^{n}-\widetilde{H}_{\phi
}^{n}\right)  ^{2}\left(  x,y\right)  dxdy=0$.
\end{theorem}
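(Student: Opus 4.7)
The plan is to exploit the three identities collected in Lemma~\ref{lemma estimates}, in particular the $L^2$-isometry that is available because the $H_\phi^n$ are symmetric. Concretely, set $X_n := \langle \omega\otimes\omega, H_\phi^n\rangle$ and, for every pair of indices $n,m$, apply the third identity of Lemma~\ref{lemma estimates} to the symmetric function $f := H_\phi^n - H_\phi^m$. This immediately yields
\[
\int_B \Bigl| X_n - X_m - c_{n,m}\Bigr|^2 \mu(d\omega)
= 2\int_{\mathbb{T}^2}\!\!\int_{\mathbb{T}^2} \bigl(H_\phi^n - H_\phi^m\bigr)^2(x,y)\,dx\,dy,
\]
where $c_{n,m} := \int_{\mathbb{T}^2}\bigl(H_\phi^n(x,x) - H_\phi^m(x,x)\bigr)\,dx$.

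Next, I would verify that both sides of this relation tend to $0$ as $n,m\to\infty$. For the right-hand side, the triangle inequality in $L^2(\mathbb{T}^2\times\mathbb{T}^2)$ and the first hypothesis of the theorem give
\[
\int\!\!\int (H_\phi^n - H_\phi^m)^2 \,dx\,dy
\leq 2\int\!\!\int (H_\phi^n - H_\phi)^2\,dx\,dy + 2\int\!\!\int (H_\phi^m - H_\phi)^2\,dx\,dy \longrightarrow 0.
\]
For the centering constants, the second hypothesis $\int H_\phi^n(x,x)\,dx \to 0$ forces $c_{n,m} \to 0$. Combining the two and using the triangle inequality in $L^2(\mu)$ yields $\|X_n - X_m\|_{L^2(\mu)} \to 0$, so $(X_n)$ is Cauchy in mean square, and its limit is well defined; we call it $\langle b(\omega),\phi\rangle$.

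The independence from the approximating sequence is obtained by the same argument applied to the symmetric difference $H_\phi^n - \widetilde H_\phi^n$: Lemma~\ref{lemma estimates} gives
\[
\int_B \Bigl| \langle \omega\otimes\omega, H_\phi^n\rangle
- \langle \omega\otimes\omega, \widetilde H_\phi^n\rangle - \widetilde c_n\Bigr|^2 \mu(d\omega)
= 2\int\!\!\int (H_\phi^n - \widetilde H_\phi^n)^2\,dx\,dy,
\]
with $\widetilde c_n := \int (H_\phi^n - \widetilde H_\phi^n)(x,x)\,dx \to 0$ by the second hypothesis applied to both sequences, while the right-hand side tends to $0$ by assumption.

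I do not expect a genuine obstacle here: the only subtle point is checking that $H_\phi^n - H_\phi^m$ (and $H_\phi^n - \widetilde H_\phi^n$) is actually symmetric and in $H^{2+\epsilon}$, so that the variance identity of Lemma~\ref{lemma estimates} is legitimately applicable; this is immediate from the assumptions. The conceptual content of the theorem is entirely encoded in the third formula of Lemma~\ref{lemma estimates}, which turns the Cauchy property in $L^2(\mu)$ into the Cauchy property of $H_\phi^n$ in $L^2(\mathbb{T}^2\times \mathbb{T}^2)$, modulo a mean correction that vanishes by the diagonal hypothesis.
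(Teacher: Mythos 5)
Your argument is correct and is exactly the intended route: the paper presents Theorem \ref{Thm Cauchy} as a consequence (proved in \cite{Flandoli}) of the variance identity in Lemma \ref{lemma estimates}, applied to the symmetric differences $H_\phi^n-H_\phi^m$ and $H_\phi^n-\widetilde H_\phi^n$, with the diagonal means handled by the second hypothesis, which is precisely what you do.
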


A simple example of functions $H_{\phi}^{n}$ with these properties is given in
\cite{Flandoli}. In addition to these fact, here we prove exponential
integrability, see (\ref{exp integr}).

\begin{theorem}
\label{thm exponential est}Given a bounded measurable $f$ with $\left\Vert
f\right\Vert _{\infty}\leq1$, we have
\[
\int_{B}e^{\epsilon\left\vert \left\langle \omega\otimes\omega,f\right\rangle
\right\vert }\mu\left(  d\omega\right)  <\infty
\]
for all $\epsilon<\frac{1}{2}$.
\end{theorem}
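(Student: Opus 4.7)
The natural strategy is to bound the exponential integral by expanding $e^{\epsilon x}$ in a Taylor series and applying the integer-moment bound of Lemma~\ref{lemma estimates} term by term. Since $|\langle\omega\otimes\omega,f\rangle|\ge 0$, monotone convergence gives
\[
\int_{B}e^{\epsilon|\langle\omega\otimes\omega,f\rangle|}\mu(d\omega)=\sum_{p=0}^{\infty}\frac{\epsilon^{p}}{p!}\int_{B}|\langle\omega\otimes\omega,f\rangle|^{p}\mu(d\omega).
\]
The terms $p=0,1$ are finite by inspection (Cauchy--Schwarz handles $p=1$ from the $p=2$ bound), and for $p\ge 2$ Lemma~\ref{lemma estimates} together with $\|f\|_{\infty}\le 1$ yields
\[
\frac{\epsilon^{p}}{p!}\int_{B}|\langle\omega\otimes\omega,f\rangle|^{p}\mu(d\omega)\le \frac{(2p)!}{2^{p}(p!)^{2}}\,\epsilon^{p}=\binom{2p}{p}\frac{\epsilon^{p}}{2^{p}}.
\]

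Next I would use the elementary bound $\binom{2p}{p}\le 4^{p}$ (or, more precisely, Stirling's asymptotic $\binom{2p}{p}\sim 4^{p}/\sqrt{\pi p}$) to dominate the general term by $(2\epsilon)^{p}$. The geometric series $\sum_{p}(2\epsilon)^{p}$ converges whenever $2\epsilon<1$, giving the claim for any $\epsilon<1/2$. This proves the theorem directly in the regular setting $f\in H^{2+\epsilon}(\mathbb{T}^{2}\times\mathbb{T}^{2})$, in which Lemma~\ref{lemma estimates} applies verbatim.

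To reach general bounded measurable $f$ with $\|f\|_{\infty}\le 1$, I would approximate $f$ by symmetric mollifications $f_{n}\in C^{\infty}(\mathbb{T}^{2}\times\mathbb{T}^{2})$ with $\|f_{n}\|_{\infty}\le 1$ (standard convolution with smooth even kernels preserves the $L^{\infty}$ bound). By the $L^{2}$ isometry-type identity in Lemma~\ref{lemma estimates}, applied to the differences $f_{n}-f_{m}$, the sequence $\langle\omega\otimes\omega,f_{n}\rangle$ is Cauchy in $L^{2}(\mu)$ and converges to an object $\langle\omega\otimes\omega,f\rangle$ in the spirit of Theorem~\ref{Thm Cauchy}. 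Passing to an a.s.\ convergent subsequence, Fatou's lemma transfers the uniform exponential moment bound from the $f_{n}$ to the limit.

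The main delicacy is not the Taylor expansion itself but this final extension step: one must choose the mollification carefully so that the diagonal term $\int f_{n}(x,x)\,dx$ stays under control (its behaviour enters both in Theorem~\ref{Thm Cauchy} and in the identification of the limit $\langle\omega\otimes\omega,f\rangle$). Modulo this essentially standard approximation, the proof reduces to the combinatorial identity $\binom{2p}{p}/2^{p}\sim 2^{p}/\sqrt{\pi p}$, which pins the threshold exactly at $\epsilon=1/2$.
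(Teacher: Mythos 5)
Your proof follows essentially the same route as the paper: expand the exponential by monotone convergence, apply the moment bound of Lemma~\ref{lemma estimates} with $\Vert f\Vert_{\infty}\le 1$, and note that $\sum_{p}\binom{2p}{p}\left(\epsilon/2\right)^{p}$ converges for $\epsilon<\frac{1}{2}$ (the paper uses the ratio test where you use $\binom{2p}{p}\le 4^{p}$, an immaterial difference). Your final approximation/Fatou step for merely bounded measurable $f$ is actually more careful than the paper, which applies the moment bound directly to the limiting random variable; your remark that the diagonal terms $\int f_{n}(x,x)\,dx$ must be kept under control is precisely the issue the paper settles, for the relevant case $f=H_{\phi}$, through Theorem~\ref{Thm Cauchy} and the Dirichlet-kernel lemma.
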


\begin{proof}%
\[
\mathbb{E}\left[  e^{\epsilon\left\vert \left\langle \omega\otimes
\omega,f\right\rangle \right\vert }\right]  =\sum_{p=0}^{\infty}\frac
{\epsilon^{p}\mathbb{E}\left[  \left\vert \left\langle \omega\otimes
\omega,f\right\rangle \right\vert ^{p}\right]  }{p!}\leq\sum_{p=0}^{\infty
}\left(  \frac{\epsilon}{2}\right)  ^{p}\frac{\left(  2p\right)  !}{p!p!}.
\]
This series converges for $\epsilon<\frac{1}{2}$ because (using ratio test)%
\[
\frac{\left(  \frac{\epsilon}{2}\right)  ^{p+1}\frac{\left(  2\left(
p+1\right)  \right)  !}{\left(  p+1\right)  !\left(  p+1\right)  !}}{\left(
\frac{\epsilon}{2}\right)  ^{p}\frac{\left(  2p\right)  !}{p!p!}}%
=\frac{\epsilon}{2}\frac{\left(  2p+2\right)  \left(  2p+1\right)  }{\left(
p+1\right)  \left(  p+1\right)  }\rightarrow2\epsilon\text{.}%
\]

\end{proof}

\section{Proof of Theorem \ref{main thm} \label{section proof main theorem}}

\subsection{Approximate problem}

Recall from the Introduction that $\delta>0$ is fixed and we set $B=$
$H^{-1-\delta}\left(  \mathbb{T}^{2}\right)  $, $H=L^{2}\left(  \mathbb{T}%
^{2}\right)  $; recall also from Section \ref{section def nonlinear term} that
we write $e_{n}\left(  x\right)  =e^{2\pi in\cdot x}$, $x\in\mathbb{T}^{2}$,
$n\in\mathbb{Z}^{2}$, that is a complete orthonormal system in $H^{\mathbb{C}%
}:=L^{2}\left(  \mathbb{T}^{2};\mathbb{C}\right)  $. Given $N\in\mathbb{N}$,
let $H_{N}^{\mathbb{C}}$ be the span of $e_{n}$\ for $\left\vert n\right\vert
_{\infty}\leq N$, $\left\vert n\right\vert _{\infty}:=\max\left(  \left\vert
n_{1}\right\vert ,\left\vert n_{2}\right\vert \right)  $ for $n=\left(
n_{1},n_{2}\right)  $; it is a subspace of $H^{\mathbb{C}}$. Let $H_{N}$ be
the subspace of $H_{N}^{\mathbb{C}}$ made of real-valued elements; it is a
subspace of $H$ and is characterized by the following property: $\omega
=\sum_{\left\vert n\right\vert _{\infty}\leq N}\omega_{n}e_{n}$ is in $H_{N}$
if and only if $\overline{\omega_{n}}=\omega_{-n}$, for all $n$ such that
$\left\vert n\right\vert _{\infty}\leq N$.

Let $\pi_{N}$ be the orthogonal projection of $H$ onto $H_{N}$. It is given by
$\pi_{N}\omega=\sum_{\left\vert n\right\vert _{\infty}\leq N}\left\langle
\omega,e_{n}\right\rangle _{H}e_{n}$, for all $\omega\in H$. We extend
$\pi_{N}$ to an operator on $B$ by setting%
\begin{align*}
\pi_{N} &  :B\rightarrow H_{N}\\
\pi_{N}\omega &  =\sum_{\left\vert n\right\vert _{\infty}\leq N}\left\langle
\omega,e_{n}\right\rangle e_{n}%
\end{align*}
where now $\left\langle \omega,e_{n}\right\rangle $ is the dual pairing. We
may introduce the Dirichlet kernel%
\begin{equation}
\theta_{N}\left(  x_{1},x_{2}\right)  =\sum_{n_{1}=-N}^{N}\sum_{n_{2}=-N}%
^{N}e^{2\pi i\left(  n_{1}x_{1}+n_{2}x_{2}\right)  }=\sum_{\left\vert
n\right\vert _{\infty}\leq N}e^{2\pi in\cdot x}\label{Dirichlet kernel}%
\end{equation}
for $x=\left(  x_{1},x_{2}\right)  \in\mathbb{T}^{2}$, and check that%
\[
\pi_{N}\omega=\theta_{N}\ast\omega.
\]

We define the operator
\[
b_{N}:B\rightarrow H_{N}%
\]
as%
\[
b_{N}\left(  \omega\right)  =-\pi_{N}\left(  u\left(  \pi_{N}\omega\right)
\cdot\nabla\pi_{N}\omega\right)  ,\qquad\omega\in B
\]
where $u\left(  \pi_{N}\omega\right)  $ denotes the result of Biot-Savart law
applied to $\pi_{N}\omega$,
\[
u\left(  \pi_{N}\omega\right)  \left(  x\right)  :=\int_{\mathbb{T}^{2}%
}K\left(  x-y\right)  \left(  \pi_{N}\omega\right)  \left(  y\right)  dy.
\]
The operator $b_{N}$ has the following properties. We denote by
$\operatorname{div}b_{N}\left(  \omega\right)  $ the function%
\[
\operatorname{div}b_{N}\left(  \omega\right)  =\sum_{\left\vert n\right\vert
_{\infty}\leq N}\partial_{n}\left\langle b_{N}\left(  \omega\right)
,e_{n}\right\rangle _{H}%
\]
where, when defined, $\partial_{n}F\left(  \omega\right)  =\lim_{\epsilon
\rightarrow0}\epsilon^{-1}\left(  F\left(  \omega+\epsilon e_{n}\right)
-F\left(  \omega\right)  \right)  $, for a function $F$ defined on $B$. We say
that $\operatorname{div}b_{N}\left(  \omega\right)  $ exists if $\partial
_{n}\left\langle b_{N}\left(  \omega\right)  ,e_{n}\right\rangle _{H}$ exists
for all $\left\vert n\right\vert _{\infty}\leq N$. Moreover, we set%
\[
\operatorname{div}_{\mu}b_{N}\left(  \omega\right)  :=\operatorname{div}%
b_{N}\left(  \omega\right)  -\left\langle \omega,b_{N}\left(  \omega\right)
\right\rangle
\]
where $\left\langle \omega,b_{N}\left(  \omega\right)  \right\rangle $ is the
dual pairing. It is easy to check that this definition is coherent with the
general one (\ref{def div mu}) given in the Introduction.

\begin{lemma}
\label{lemma div b}The divergence $\operatorname{div}b_{N}\left(
\omega\right)  $ exists for all $\omega\in B$ and
\begin{align*}
\operatorname{div}b_{N}\left(  \omega\right)   &  =0\\
\left\langle \omega,b_{N}\left(  \omega\right)  \right\rangle  &  =0
\end{align*}
and thus%
\[
\operatorname{div}_{\mu}b_{N}\left(  \omega\right)  =0.
\]

\end{lemma}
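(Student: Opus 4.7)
The plan is to exploit the fact that $b_{N}$ depends on $\omega$ only through $\pi_{N}\omega$ and takes values in $H_{N}$, so both identities reduce to statements about a smooth quadratic vector field on the finite-dimensional Euclidean space $(H_{N},\langle\cdot,\cdot\rangle_{H})$; the divergence is basis-independent, so I will compute it in a real orthonormal basis $\{f_{k}\}$ of $H_{N}$ rather than the complex Fourier basis. Writing $\omega_{N}:=\pi_{N}\omega$, for the pairing I use the self-adjointness of $\pi_{N}$ together with $b_{N}(\omega)\in H_{N}$ to reduce $\langle\omega,b_{N}(\omega)\rangle$ to $\langle\omega_{N},b_{N}(\omega)\rangle_{H}=-\int_{\mathbb{T}^{2}}u(\omega_{N})\cdot\nabla(\omega_{N}^{2}/2)\,dx$, which vanishes by integration by parts and $\operatorname{div}u(\omega_{N})=0$. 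This is essentially enstrophy conservation for the Galerkin truncation.

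For the divergence, I would write $b_{N}(\omega)=B_{N}(\omega,\omega)$ with the bilinear form $B_{N}(\alpha,\beta):=-\pi_{N}(u(\pi_{N}\alpha)\cdot\nabla\pi_{N}\beta)$, so that $\partial_{f_{k}}b_{N}(\omega)=B_{N}(f_{k},\omega)+B_{N}(\omega,f_{k})$, giving
\[
\operatorname{div}b_{N}(\omega)=-\sum_{k}\int u(f_{k})\cdot\nabla\omega_{N}\,f_{k}\,dx-\sum_{k}\int u(\omega_{N})\cdot\nabla f_{k}\,f_{k}\,dx.
\]
The second sum vanishes termwise since each integral equals $\int u(\omega_{N})\cdot\nabla(f_{k}^{2}/2)\,dx=0$. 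For the first sum, I insert the Biot-Savart representation $u(f_{k})(x)=\int K(x-y)f_{k}(y)\,dy$; interchanging summation and integration produces the reproducing kernel of $H_{N}$, which on the torus is translation invariant,
\[
\sum_{k}f_{k}(x)f_{k}(y)=\theta_{N}(x-y).
\]
After the substitution $z=x-y$, the first sum factors as $-\bigl(\int K(z)\theta_{N}(z)\,dz\bigr)\cdot\int\nabla\omega_{N}(x)\,dx$, and each factor is zero on its own: the first by the antisymmetry $K(-z)=-K(z)$ combined with the symmetry $\theta_{N}(-z)=\theta_{N}(z)$, the second by periodicity of $\omega_{N}$.

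The delicate step is the first divergence sum: the $\omega$-linear factor in $\partial_{f_{k}}b_{N}$ leaves a residue that is not obviously zero term by term, only its total cancels. The mechanism combining (i) the Biot-Savart convolution structure, (ii) the translation-invariant reproducing kernel $\theta_{N}(x-y)$, and (iii) the parity of $K$ and $\theta_{N}$ is what yields the cancellation; without translation invariance one would generically pick up a non-trivial It\^{o}-type correction. Once $\operatorname{div}b_{N}(\omega)$ and $\langle\omega,b_{N}(\omega)\rangle$ are both shown to vanish, the identity $\operatorname{div}_{\mu}b_{N}(\omega)=0$ follows at once from its definition.
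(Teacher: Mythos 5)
Your proof is correct, and it splits naturally into two halves: the pairing identity $\left\langle \omega,b_{N}\left(\omega\right)\right\rangle =0$ is handled exactly as in the paper (reduce, via $b_{N}\left(\omega\right)\in H_{N}$, to $\left\langle \pi_{N}\omega,u\left(\pi_{N}\omega\right)\cdot\nabla\pi_{N}\omega\right\rangle _{H}$ and integrate by parts using $\operatorname{div}u=0$), while your treatment of $\operatorname{div}b_{N}$ takes a genuinely different route. The paper computes the divergence directly in the Fourier directions $e_{n}$ and kills it \emph{term by term}: after one integration by parts the $n$-th summand splits into $-\left\langle e_{n},u\left(\pi_{N}\omega\right)\cdot\nabla e_{n}\right\rangle _{H}$, which vanishes by the same divergence-free identity as in Step 1, and $-\left\langle \pi_{N}\omega,u\left(e_{n}\right)\cdot\nabla e_{n}\right\rangle _{H}$, which vanishes because of the explicit relation $u\left(e_{n}\right)=\frac{n^{\perp}}{\left\vert n\right\vert ^{2}}e_{n}$, hence $u\left(e_{n}\right)\cdot\nabla e_{n}=0$. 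You instead compute the trace of $Db_{N}\left(\omega\right)$ in an arbitrary real orthonormal basis, dispose of the $B_{N}\left(\omega,f_{k}\right)$ half termwise just as the paper does, and sum the $B_{N}\left(f_{k},\omega\right)$ half globally via the reproducing-kernel identity $\sum_{k}f_{k}\left(x\right)f_{k}\left(y\right)=\theta_{N}\left(x-y\right)$ and the parity of $K$ (odd) against $\theta_{N}$ (even); this is legitimate since $\left\vert K\left(z\right)\right\vert \leq C/\left\vert z\right\vert$ makes $K\theta_{N}$ integrable on $\mathbb{T}^{2}$, and in fact either factor of your product $\bigl(\int K\theta_{N}\,dz\bigr)\cdot\int\nabla\omega_{N}\,dx$ vanishes on its own. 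Two small points to make explicit: the sums over $k$ are finite, so the interchanges are trivial, and since the paper literally defines $\operatorname{div}b_{N}$ as the sum over the complex Fourier directions, you should say one word identifying that sum with the basis-independent trace you compute. What the paper's route buys is an elementary coordinate-by-coordinate cancellation requiring only the formula for $u\left(e_{n}\right)$; what yours buys is a basis-free argument that isolates the structural mechanism (translation invariance of the Galerkin projection kernel plus antisymmetry of Biot--Savart), and it runs pleasingly parallel to the symmetry lemma on $W_{N}=\theta_{N}\ast\theta_{N}$ that the paper only needs later, for the limit property (\ref{last limit property}).
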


\begin{proof}
\textbf{Step 1}: A basic identity is%
\[
\left\langle \omega,b_{N}\left(  \omega\right)  \right\rangle =0
\]
for all $\omega\in B$, where as usual $\left\langle .,.\right\rangle $ denotes
dual pairing. This identity holds because%
\[
\left\langle \omega,\pi_{N}\left(  u\left(  \pi_{N}\omega\right)  \cdot
\nabla\pi_{N}\omega\right)  \right\rangle =\left\langle \pi_{N}\omega,u\left(
\pi_{N}\omega\right)  \cdot\nabla\pi_{N}\omega\right\rangle _{H}=0
\]
where the first equality can be checked by writing $\omega=\sum\left\langle
\omega,e_{n}\right\rangle e_{n}$ (the series converges in $B$), and the second
equality is true because
\[
\left\langle v\cdot\nabla f,f\right\rangle =\frac{1}{2}\int_{\mathbb{T}^{2}%
}v\left(  x\right)  \cdot\nabla f^{2}\left(  x\right)  dx=-\frac{1}{2}%
\int_{\mathbb{T}^{2}}\operatorname{div}v\left(  x\right)  f^{2}\left(
x\right)  dx=0
\]
for all sufficiently smooth divergence free vector field $v$ (we take
$v=u\left(  \pi_{N}\omega\right)  $ that is a smooth divergence free vector
field) and all sufficiently smooth functions $f$ (we take $f=\pi_{N}\omega$).

\textbf{Step 2}: Recall that $u\left(  e_{n}\right)  \left(  x\right)  $ is
periodic, divergence free, and such that $\nabla^{\perp}\cdot u\left(
e_{n}\right)  =e_{n}$ (it is also gven by the Biot-Savart law $u\left(
e_{n}\right)  \left(  x\right)  :=\int_{\mathbb{T}^{2}}K\left(  x-y\right)
e_{n}\left(  y\right)  dy$). Then we have%
\[
u\left(  e_{n}\right)  \left(  x\right)  \cdot\nabla e_{n}\left(  x\right)  =0
\]
for every $n\in\mathbb{Z}^{2}$. Indeed,
\[
u\left(  e_{n}\right)  \left(  x\right)  \cdot\nabla e_{n}\left(  x\right)
=2\pi i\left(  u\left(  e_{n}\right)  \left(  x\right)  \cdot n\right)
e_{n}\left(  x\right)
\]
and this is zero because $u\left(  e_{n}\right)  \left(  x\right)  \cdot n=0$.
To prove the latter property, it is necessary to understand the shape of
$u\left(  e_{n}\right)  \left(  x\right)  $. Let us prove that
\[
u\left(  e_{n}\right)  \left(  x\right)  =\frac{n^{\perp}}{\left\vert
n\right\vert ^{2}}e_{n}\left(  x\right)
\]
(which implies $u\left(  e_{n}\right)  \left(  x\right)  \cdot n=0$ because
$n^{\perp}\cdot n=0$). The function $u\left(  e_{n}\right)  $ is uniquely
defined by the conditions to be periodic, divengence free and $\nabla^{\perp
}\cdot u\left(  e_{n}\right)  =e_{n}$, so we have to check these conditions
for the function $\frac{n^{\perp}}{\left\vert n\right\vert ^{2}}e_{n}\left(
x\right)  $. This is clearly periodic; it is divengence free because
$\operatorname{div}u\left(  e_{n}\right)  \left(  x\right)  =\frac{n^{\perp}%
}{\left\vert n\right\vert ^{2}}e_{n}\left(  x\right)  \cdot n=0$; and finally
$\nabla^{\perp}\cdot\frac{n^{\perp}}{\left\vert n\right\vert ^{2}}e_{n}\left(
x\right)  =\frac{n^{\perp}}{\left\vert n\right\vert ^{2}}e_{n}\left(
x\right)  \cdot n^{\perp}=e_{n}\left(  x\right)  $.

\textbf{Step 3}: Finally we can prove that $\operatorname{div}b_{N}\left(
\omega\right)  =0$. It is
\[
\operatorname{div}b_{N}\left(  \omega\right)  =-\sum_{\left\vert n\right\vert
\leq N}\partial_{n}\left\langle \pi_{N}\left(  u\left(  \pi_{N}\omega\right)
\cdot\nabla\pi_{N}\omega\right)  ,e_{n}\right\rangle _{H}.
\]
We have
\begin{align*}
&  \partial_{n}\left\langle \pi_{N}\left(  u\left(  \pi_{N}\omega\right)
\cdot\nabla\pi_{N}\omega\right)  ,e_{n}\right\rangle _{H}\\
&  =\partial_{n}\left\langle u\left(  \pi_{N}\omega\right)  \cdot\nabla\pi
_{N}\omega,e_{n}\right\rangle _{H}\\
&  =-\partial_{n}\left\langle \pi_{N}\omega,u\left(  \pi_{N}\omega\right)
\cdot\nabla e_{n}\right\rangle _{H}%
\end{align*}
(we have used integration by parts and $\operatorname{div}u\left(  \pi
_{N}\omega\right)  =0$ in the last identity)%
\begin{align*}
& =-\left\langle \partial_{n}\left(  \pi_{N}\omega\right)  ,u\left(  \pi
_{N}\omega\right)  \cdot\nabla e_{n}\right\rangle _{H}-\left\langle \pi
_{N}\omega,\partial_{n}\left(  u\left(  \pi_{N}\omega\right)  \cdot\nabla
e_{n}\right)  \right\rangle _{H}\\
& =-\left\langle e_{n},u\left(  \pi_{N}\omega\right)  \cdot\nabla
e_{n}\right\rangle _{H}-\left\langle \pi_{N}\omega,u\left(  e_{n}\right)
\cdot\nabla e_{n}\right\rangle _{H}%
\end{align*}
because%
\[
\partial_{n}\left(  \pi_{N}\omega\right)  =\partial_{n}\left(  \sum
_{\left\vert n^{\prime}\right\vert \leq N}\left\langle \omega,e_{n^{\prime}%
}\right\rangle e_{n^{\prime}}\right)  =\sum_{\left\vert n^{\prime}\right\vert
\leq N}\partial_{n}\left(  \left\langle \omega,e_{n^{\prime}}\right\rangle
\right)  e_{n^{\prime}}=\sum_{\left\vert n^{\prime}\right\vert \leq N}%
\delta_{nn^{\prime}}e_{n^{\prime}}%
\]%
\[
\partial_{n}\left(  u\left(  \pi_{N}\omega\right)  \cdot\nabla e_{n}\right)
=\partial_{n}\left(  \sum_{\left\vert n^{\prime\prime}\right\vert \leq
N}\left\langle \omega,e_{n^{\prime\prime}}\right\rangle u\left(
e_{n^{\prime\prime}}\right)  \cdot\nabla e_{n}\right)  =\sum_{\left\vert
n^{\prime\prime}\right\vert \leq N}\delta_{nn^{\prime\prime}}u\left(
e_{n^{\prime\prime}}\right)  \cdot\nabla e_{n}.
\]
The first term, $\left\langle e_{n},u\left(  \pi_{N}\omega\right)  \cdot\nabla
e_{n}\right\rangle _{H}$, is zero by the same general rule recalled in Step 1.
The second term is zero by Step 2. Therefore $\operatorname{div}b_{N}\left(
\omega\right)  =0$.
\end{proof}

\vspace{0.5cm}

Consider the finite dimensional ordinary differential equation in the space
$H_{N}$ defined as%
\begin{equation}
\frac{d\omega_{t}^{N}}{dt}=b_{N}\left(  \omega_{t}^{N}\right)  ,\qquad
\omega_{0}^{N}\in H_{N}. \label{approx probl}%
\end{equation}
The function $b_{N}$, in $H_{N}$, is differentable, bounded with bounded
derivative on bounded sets. Hence, for every $\omega_{0}^{N}\in H_{N}$, there
is a unique local solution $\omega_{t}^{N,\omega_{0}^{N}}$ of equation
(\ref{approx probl}) and the flow map $\omega_{0}^{N}\mapsto\omega
_{t}^{N,\omega_{0}^{N}}$, where defined, is continuously differentiable,
invertible with continuously differentiable inverse. The solution is global
because of the energy estimate%
\[
\frac{d\left\Vert \omega_{t}^{N}\right\Vert _{H}^{2}}{dt}=2\left\langle
b_{N}\left(  \omega_{t}^{N}\right)  ,\omega_{t}^{N}\right\rangle _{H}=0
\]
which implies $\sup_{t\in\left[  0,\tau\right]  }\left\Vert \omega_{t}%
^{N}\right\Vert _{H}^{2}\leq\left\Vert \omega_{0}^{N}\right\Vert _{H}^{2}$ on
any interval $\left[  0,\tau\right]  $ of local existence; the property
$\left\langle b_{N}\left(  \omega_{t}^{N}\right)  ,\omega_{t}^{N}\right\rangle
_{H}=0$ holds by Lemma \ref{lemma div b}. We denote by $\Phi_{t}^{N}%
:H_{N}\rightarrow H_{N}$ the global flow defined as $\Phi_{t}^{N}\left(
\omega_{0}^{N}\right)  =\omega_{t}^{N,\omega_{0}^{N}}$.

Denote by $\mu^{N}\left(  d\omega\right)  $ the image measure, on $H_{N}$, of
$\mu\left(  d\omega\right)  $ under the projection $\pi_{N}$. This measure is
invariant under the flow $\Phi_{t}^{N}$, because $\operatorname{div}_{\mu
}b_{N}\left(  \omega\right)  =0$: for every smooth $F:H_{N}\rightarrow
\lbrack0,\infty)$, bounded with bounded derivatives,
\begin{align*}
\int_{H_{N}}\left\langle b_{N}\left(  \omega\right)  ,DF\left(  \omega\right)
\right\rangle _{H_{N}}\mu^{N}\left(  d\omega\right)   &  =\int_{B}\left\langle
b_{N}\left(  \omega\right)  ,DF\left(  \pi_{N}\omega\right)  \right\rangle
_{H}\mu\left(  d\omega\right) \\
&  =-\int_{B}F\left(  \pi_{N}\omega\right)  \operatorname{div}_{\mu}%
b_{N}\left(  \omega\right)  \mu\left(  d\omega\right)  =0.
\end{align*}

\subsection{Continuity equation for the approximate problem}

Given a measurable function $\rho_{0}^{N}:H_{N}\rightarrow\lbrack0,\infty)$,
with $\int_{B}\rho_{0}^{N}\left(  \pi_{N}\omega\right)  \mu\left(
d\omega\right)  <\infty$, consider the measure $\rho_{0}^{N}\left(  \pi
_{N}\omega\right)  \mu^{N}\left(  d\omega\right)  $ and its push forward under
the flow map $\Phi_{t}^{N}$; denote it by $\nu_{t}^{N}$. By definition, for
bounded measurable $F:H_{N}\rightarrow\lbrack0,\infty)$,
\[
\int_{H_{N}}F\left(  \omega\right)  \nu_{t}^{N}\left(  d\omega\right)
=\int_{H_{N}}F\left(  \Phi_{t}^{N}\left(  \omega\right)  \right)  \rho_{0}%
^{N}\left(  \omega\right)  \mu^{N}\left(  d\omega\right)  .
\]
From the invariance of $\mu^{N}$ under the flow $\Phi_{t}^{N}$, we have%
\[
\int_{H_{N}}F\left(  \omega\right)  \nu_{t}^{N}\left(  d\omega\right)
=\int_{H_{N}}F\left(  \omega\right)  \rho_{0}^{N}\left(  \left(  \Phi_{t}%
^{N}\right)  ^{-1}\left(  \omega\right)  \right)  \mu^{N}\left(
d\omega\right)
\]
hence%
\[
\nu_{t}^{N}\left(  d\omega\right)  =\rho_{t}^{N}\left(  \pi_{N}\omega\right)
\mu^{N}\left(  d\omega\right)
\]
where
\begin{equation}
\rho_{t}^{N}\left(  \omega\right)  =\rho_{0}^{N}\left(  \left(  \Phi_{t}%
^{N}\right)  ^{-1}\left(  \omega\right)  \right)  ,\qquad\omega\in H_{N}.
\label{formula for density}%
\end{equation}
We have partially proved the following statement.

\begin{lemma}
\label{lemma rho n}Consider equation (\ref{approx probl}) in $H_{N}$, with the
associated flow $\Phi_{t}^{N}$. Given at time zero a measure of the form
$\rho_{0}^{N}\left(  \pi_{N}\omega\right)  \mu^{N}\left(  d\omega\right)  $
with $\int_{B}\rho_{0}^{N}\left(  \pi_{N}\omega\right)  \mu\left(
d\omega\right)  <\infty$, its push forward at time $t$, under the flow map
$\Phi_{t}^{N}$, is a measure of the form $\rho_{t}^{N}\left(  \pi_{N}%
\omega\right)  \mu^{N}\left(  d\omega\right)  $, with $\int_{B}\rho_{t}%
^{N}\left(  \pi_{N}\omega\right)  \mu\left(  d\omega\right)  <\infty$. If in
addition $\int_{B}\rho_{0}^{N}\left(  \pi_{N}\omega\right)  \log\rho_{0}%
^{N}\left(  \pi_{N}\omega\right)  \mu\left(  d\omega\right)  <\infty$, the
same is true at time $t$ and%
\begin{equation}
\int_{B}\rho_{t}^{N}\left(  \pi_{N}\omega\right)  \log\rho_{t}^{N}\left(
\pi_{N}\omega\right)  \mu\left(  d\omega\right)  =\int_{B}\rho_{0}^{N}\left(
\pi_{N}\omega\right)  \log\rho_{0}^{N}\left(  \pi_{N}\omega\right)  \mu\left(
d\omega\right)  . \label{LlogL for approx}%
\end{equation}
If in addition $\rho_{0}^{N}$ is bounded, then $\rho_{t}^{N}\leq\left\Vert
\rho_{0}^{N}\right\Vert _{\infty}$. Finally. $\rho_{t}^{N}$ satisfies the
continuity equation%
\begin{equation}
\int_{0}^{T}\int_{B}\left(  \partial_{t}F\left(  t,\omega\right)
+\left\langle DF\left(  t,\omega\right)  ,b_{N}\left(  \omega\right)
\right\rangle _{H}\right)  \rho_{t}^{N}\left(  \pi_{N}\omega\right)
\mu\left(  d\omega\right)  dt=-\int_{B}F\left(  0,\omega\right)  \rho_{0}%
^{N}\left(  \pi_{N}\omega\right)  \mu\left(  d\omega\right)
\label{cont weak form approx}%
\end{equation}
for all $F\in\mathcal{FC}_{b,T}^{1}$ of the form $F\left(  t,\omega\right)
=\sum_{i=1}^{m}\widetilde{f}_{i}\left(  \left\langle \omega,e_{n}\right\rangle
,\left\vert n\right\vert _{\infty}\leq N\right)  g_{i}\left(  t\right)  $.
\end{lemma}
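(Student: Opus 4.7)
The plan is to leverage the explicit formula $\rho_t^N(\omega) = \rho_0^N((\Phi_t^N)^{-1}\omega)$ already derived in the paragraph preceding the statement, together with the $\Phi_t^N$-invariance of $\mu^N$ (a consequence of Lemma \ref{lemma div b}, as noted in the text). The first assertion, that the push-forward has density $\rho_t^N \circ \pi_N$ with finite $L^1(\mu)$-norm, is essentially already contained in equation (\ref{formula for density}): by invariance,
\[
\int_B \rho_t^N(\pi_N \omega)\, \mu(d\omega) = \int_{H_N} \rho_0^N((\Phi_t^N)^{-1}x)\,\mu^N(dx) = \int_{H_N} \rho_0^N(x)\,\mu^N(dx) < \infty.
\]

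For the LlogL identity (\ref{LlogL for approx}) and the boundedness claim I would argue identically, replacing the integrand by $\rho_0^N \log \rho_0^N$ or by $\|\rho_0^N\|_\infty$ pointwise. Specifically, since the map $(\Phi_t^N)^{-1}$ is a $\mu^N$-preserving bijection of $H_N$,
\[
\int_B \rho_t^N(\pi_N\omega)\log \rho_t^N(\pi_N\omega)\,\mu(d\omega) = \int_{H_N}(\rho_0^N\log\rho_0^N)((\Phi_t^N)^{-1}x)\,\mu^N(dx) = \int_{H_N} \rho_0^N\log\rho_0^N\,d\mu^N,
\]
which is (\ref{LlogL for approx}). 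The bound $\rho_t^N\le\|\rho_0^N\|_\infty$ is then an immediate pointwise consequence of the formula (\ref{formula for density}).

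The real content is the continuity equation (\ref{cont weak form approx}). The plan is as follows. A test function $F\in\mathcal{FC}_{b,T}^1$ of the specified form depends on $\omega$ only through $\pi_N\omega$, so I can view it as a smooth $\tilde F:[0,T]\times H_N\to\mathbb{R}$ of compact $t$-support in $[0,T)$ (since $g_i(T)=0$). For fixed $\omega_0\in H_N$, the chain rule applied to $t\mapsto \tilde F(t,\Phi_t^N(\omega_0))$, combined with the ODE (\ref{approx probl}), yields
\[
\frac{d}{dt}\tilde F(t,\Phi_t^N\omega_0) = \partial_t \tilde F(t,\Phi_t^N\omega_0) + \langle D\tilde F(t,\Phi_t^N\omega_0),\, b_N(\Phi_t^N\omega_0)\rangle_H.
\]
Integrating in $t$ from $0$ to $T$, using $\tilde F(T,\cdot)=0$, multiplying by $\rho_0^N(\omega_0)$ and integrating against $\mu^N(d\omega_0)$, then applying Fubini (justified by boundedness of $F$, $DF$, $\partial_t F$ and the finiteness $\int \rho_0^N d\mu^N<\infty$), I obtain
\[
-\int_{H_N}\tilde F(0,\omega_0)\rho_0^N(\omega_0)\mu^N(d\omega_0) = \int_0^T\!\!\int_{H_N}\!\bigl(\partial_t\tilde F+\langle D\tilde F,b_N\rangle_H\bigr)(t,\Phi_t^N\omega_0)\rho_0^N(\omega_0)\mu^N(d\omega_0)\,dt.
\]
The inner integral on the right is, by the very definition of push-forward and formula (\ref{formula for density}), equal to $\int_{H_N}(\partial_t \tilde F+\langle D\tilde F,b_N\rangle_H)(t,x)\rho_t^N(x)\mu^N(dx)$, which unpacks to the left-hand side of (\ref{cont weak form approx}). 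I expect the only step requiring a little care is the identification $F(t,\omega)=\tilde F(t,\pi_N\omega)$ together with the agreement of $\langle DF,b_N\rangle$ with $\langle D\tilde F,b_N\rangle_H$ — immediate, since $b_N(\omega)\in H_N$ and $F$ depends only on the modes in $H_N$ — and the justification of Fubini and of the chain rule, both of which are routine in the finite-dimensional smooth setting of $H_N$.
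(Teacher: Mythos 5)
Your argument is correct and is essentially the paper's own proof: the first three assertions follow from the explicit formula $\rho_t^N=\rho_0^N\circ(\Phi_t^N)^{-1}$ together with the $\Phi_t^N$-invariance of $\mu^N$, and the continuity equation is obtained, exactly as in the paper, by applying the chain rule to $t\mapsto F(t,\Phi_t^N(\omega_0))$, integrating in $t$ using $F(T,\cdot)=0$, and transporting the result back via the push-forward; you merely spell out the steps the paper compresses into one sentence. The only slight imprecision is attributing the Fubini step to boundedness of $F$, $DF$, $\partial_t F$ alone (note $b_N$ is unbounded on $H_N$, though along trajectories $\Vert b_N(\Phi_t^N\omega_0)\Vert_H$ is controlled by $\Vert\omega_0\Vert_H^2$ thanks to conservation of the $H$-norm, and $\Vert\cdot\Vert_H^2$ is $\mu^N$-integrable), a point the paper itself leaves implicit.
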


\begin{proof}
The integrability of $\rho_{t}^{N}$ comes from the invariance of $\mu^{N}$
under $\Phi_{t}^{N}$, as well as the LlogL property; let us check this latter
one. Using (\ref{formula for density}) we have%
\begin{align*}
\int_{B}\rho_{t}^{N}\left(  \pi_{N}\omega\right)  \log\rho_{t}^{N}\left(
\pi_{N}\omega\right)  \mu\left(  d\omega\right)   &  =\int_{H_{N}}\rho_{t}%
^{N}\left(  \omega\right)  \log\rho_{t}^{N}\left(  \omega\right)  \mu
^{N}\left(  d\omega\right) \\
&  =\int_{H_{N}}\rho_{0}^{N}\left(  \left(  \Phi_{t}^{N}\right)  ^{-1}\left(
\omega\right)  \right)  \log\rho_{0}^{N}\left(  \left(  \Phi_{t}^{N}\right)
^{-1}\left(  \omega\right)  \right)  \mu^{N}\left(  d\omega\right) \\
&  =\int_{H_{N}}\rho_{0}^{N}\left(  \omega\right)  \log\rho_{0}^{N}\left(
\omega\right)  \mu^{N}\left(  d\omega\right) \\
&  =\int_{B}\rho_{0}^{N}\left(  \pi_{N}\omega\right)  \log\rho_{0}^{N}\left(
\pi_{N}\omega\right)  \mu\left(  d\omega\right)  .
\end{align*}

When $\rho_{0}^{N}$ is bounded, we have
\[
\rho_{t}^{N}\left(  \omega\right)  =\rho_{0}^{N}\left(  \left(  \Phi_{t}%
^{N}\right)  ^{-1}\left(  \omega\right)  \right)  \leq\left\Vert \rho_{0}%
^{N}\right\Vert _{\infty}.
\]

Finally, from the chain rule applied to $F\left(  t,\Phi_{t}^{N}\left(
\omega\right)  \right)  $, $\omega\in H_{N}$, we get the weak form of the
continuity equation.
\end{proof}

\begin{remark}
We may construct $\rho_{t}^{N}$ and prove (\ref{LlogL for approx}) also by the
following procedure, closer to \cite{DaPratoRoeckner}. We study the transport
equation in $H_{N}$
\[
\partial_{t}\rho_{t}^{N}+\left\langle b_{N},D\rho_{t}^{N}\right\rangle _{H}=0
\]
with initial condition $\rho_{0}^{N}$, which has the solution
(\ref{formula for density}) by the method of characteristics. Its weak form
reduces to (\ref{cont weak form approx}) because (for $F$ like those of the
Lemma)%
\begin{align*}
&  \int_{H_{N}}F\left(  t,\omega\right)  \left\langle b_{N}\left(
\omega\right)  ,D\rho_{t}^{N}\left(  \omega\right)  \right\rangle _{H}\mu
^{N}\left(  d\omega\right) \\
&  =\int_{B}F\left(  t,\omega\right)  \left\langle b_{N}\left(  \omega\right)
,D\rho_{t}^{N}\left(  \pi_{N}\omega\right)  \right\rangle _{H}\mu\left(
d\omega\right) \\
&  =-\int_{B}\left\langle DF\left(  t,\omega\right)  ,b_{N}\left(
\omega\right)  \right\rangle _{H}\rho_{t}^{N}\left(  \pi_{N}\omega\right)
\mu\left(  d\omega\right)
\end{align*}
where we have used the property $\operatorname{div}_{\mu}b_{N}\left(
\omega\right)  =0$. Finally, to prove (\ref{LlogL for approx}) as in
\cite{DaPratoRoeckner}, we compute
\begin{align*}
&  \frac{d}{dt}\int_{H_{N}}\rho_{t}^{N}\left(  \log\rho_{t}^{N}-1\right)
d\mu^{N}\\
&  =\int_{H_{N}}\log\rho_{t}^{N}\partial_{t}\rho_{t}^{N}d\mu^{N}=-\int_{H_{N}%
}\log\rho_{t}^{N}\left\langle b_{N},D\rho_{t}^{N}\right\rangle d\mu^{N}\\
&  =-\int_{H_{N}}\left\langle b_{N},D\left[  \rho_{t}^{N}\left(  \log\rho
_{t}^{N}-1\right)  \right]  \right\rangle d\mu^{N}\\
&  =\int_{H_{N}}\left[  \rho_{t}^{N}\left(  \log\rho_{t}^{N}-1\right)
\right]  \operatorname{div}_{\mu}b_{N}d\mu^{N}=0.
\end{align*}

\end{remark}

\subsection{Construction of a solution to the limit problem}

\subsubsection{First case:\ bounded continuous $\rho_{0}$}

Consider first the case when $\rho_{0}$ is a bounded continuous function on
$B$. Define the sequence of equibounded functions $\rho_{0}^{N}$ on $H_{N}$ by
setting $\rho_{0}^{N}\left(  \pi_{N}\omega\right)  =\rho_{0}\left(  \pi
_{N}\omega\right)  $. For each one of them, consider the associated function
$\rho_{t}^{N}\left(  \pi_{N}\omega\right)  $ given by Lemma \ref{lemma rho n}.
There is a subsequence, still denoted for simplicity by $\rho_{t}^{N}\left(
\pi_{N}\omega\right)  $ which converges to some function $\rho_{t}$ weak* in
$L^{\infty}\left(  \left[  0,T\right]  \times B\right)  $; entropy is weakly
lower semicontinuous in $L^{1}\left(  B,\mu\right)  $, hence
\[
\int_{B}\rho_{t}\left(  \omega\right)  \log\rho_{t}\left(  \omega\right)
\mu\left(  d\omega\right)  \leq\underset{N\rightarrow\infty}{\lim\inf}\int%
_{B}\rho_{t}^{N}\left(  \pi_{N}\omega\right)  \log\rho_{t}^{N}\left(  \pi
_{N}\omega\right)  \mu\left(  d\omega\right)  .
\]
By (\ref{LlogL for approx}) we deduce%
\[
\int_{B}\rho_{t}\left(  \omega\right)  \log\rho_{t}\left(  \omega\right)
\mu\left(  d\omega\right)  \leq\underset{N\rightarrow\infty}{\lim\inf}\int%
_{B}\rho_{0}^{N}\left(  \pi_{N}\omega\right)  \log\rho_{0}^{N}\left(  \pi
_{N}\omega\right)  \mu\left(  d\omega\right)  .
\]
But, by the definition above of $\rho_{0}^{N}$,
\[
\int_{B}\rho_{0}^{N}\left(  \pi_{N}\omega\right)  \log\rho_{0}^{N}\left(
\pi_{N}\omega\right)  \mu\left(  d\omega\right)  =\int_{B}\rho_{0}\left(
\pi_{N}\omega\right)  \log\rho_{0}\left(  \pi_{N}\omega\right)  \mu\left(
d\omega\right)  .
\]
Using Lebesgue dominated convergence theorem, this finally implies, by
continuity of $\rho_{0}$ and of the function $x\log x$, and by boundedness of
$\rho_{0}$,
\[
\int_{B}\rho_{t}\left(  \omega\right)  \log\rho_{t}\left(  \omega\right)
\mu\left(  d\omega\right)  \leq\int_{B}\rho_{0}\left(  \omega\right)  \log
\rho_{0}\left(  \omega\right)  \mu\left(  d\omega\right)  .
\]

Finally we have to prove that $\rho_{t}$ satisfies the weak formulation. We
have to pass to the limit in (\ref{cont weak form approx}). The only problem
is the term
\[
\int_{0}^{T}\int_{B}\left\langle b_{N}\left(  \omega\right)  ,DF\left(
t,\omega\right)  \right\rangle _{H}\rho_{t}^{N}\left(  \pi_{N}\omega\right)
\mu\left(  d\omega\right)  dt.
\]
We add and subtract the term%
\[
\int_{0}^{T}\int_{B}\left\langle b\left(  \omega\right)  ,DF\left(
t,\omega\right)  \right\rangle \rho_{t}^{N}\left(  \pi_{N}\omega\right)
\mu\left(  d\omega\right)  dt
\]
and use integrability of $\left\langle b\left(  \omega\right)  ,D_{H}F\left(
t,\omega\right)  \right\rangle $ and weak* convergence of $\rho_{t}^{N}\left(
\pi_{N}\omega\right)  $ to $\rho_{t}\left(  \omega\right)  $ to pass to the
limit in one addend. It remains to prove that
\[
\lim_{N\rightarrow\infty}\int_{0}^{T}\int_{B}\left(  \left\langle b_{N}\left(
\omega\right)  ,DF\left(  t,\omega\right)  \right\rangle _{H}-\left\langle
b\left(  \omega\right)  ,DF\left(  t,\omega\right)  \right\rangle \right)
\rho_{t}^{N}\left(  \pi_{N}\omega\right)  \mu\left(  d\omega\right)  dt=0.
\]
Keeping in mind again the weak* convergence of $\rho_{t}^{N}\left(  \pi
_{N}\omega\right)  $, it is sufficient to prove that $\int_{B}\left\langle
b_{N}\left(  \omega\right)  ,D_{H}F\left(  t,\omega\right)  \right\rangle
_{H}$ converges strongly to $\left\langle b\left(  \omega\right)
,D_{H}F\left(  t,\omega\right)  \right\rangle $ in $L^{1}\left(
0,T;L^{1}\left(  B,\mu\right)  \right)  $. Due to the form of $F$, it is
sufficient to prove the following claim:\ given $\phi\in C^{\infty}\left(
\mathbb{T}^{2}\right)  $,
\[
\lim_{N\rightarrow\infty}\int_{B}\left\vert \left\langle b_{N}\left(
\omega\right)  ,\phi\right\rangle _{H}-\left\langle b\left(  \omega\right)
,\phi\right\rangle \right\vert \mu\left(  d\omega\right)  =0.
\]
The remainder of this subsection is devoted to the proof of this claim.

It is not restrictive to assume that $\phi\in H_{N_{0}}$ for some $N_{0}$.
Hence, for $N$ large enough so that $\pi_{N}\phi=\phi$,
\begin{align*}
\left\langle b_{N}\left(  \omega\right)  ,\phi\right\rangle _{H} &
=-\left\langle \pi_{N}\left(  u\left(  \pi_{N}\omega\right)  \cdot\nabla
\pi_{N}\omega\right)  ,\phi\right\rangle _{H}\\
&  =-\left\langle u\left(  \pi_{N}\omega\right)  \cdot\nabla\pi_{N}\omega
,\phi\right\rangle _{H}\\
&  =\left\langle \pi_{N}\omega,u\left(  \pi_{N}\omega\right)  \cdot\nabla
\phi\right\rangle _{H}\\
&  =\left\langle \left(  \pi_{N}\omega\right)  \otimes\left(  \pi_{N}%
\omega\right)  ,H_{\phi}\right\rangle
\end{align*}
where the last identity is proved as in Remark
\ref{remark weak vorticity formulation}. We have%
\[
\left\langle \left(  \pi_{N}\omega\right)  \otimes\left(  \pi_{N}%
\omega\right)  ,H_{\phi}\right\rangle =\left\langle \omega\otimes
\omega,\left(  H_{\phi}\right)  _{N}\right\rangle
\]
where%
\[
\left(  H_{\phi}\right)  _{N}\left(  x,y\right)  =\sum_{\left\vert
n\right\vert _{\infty}\leq N}\sum_{\left\vert n^{\prime}\right\vert _{\infty
}\leq N}e_{n}\left(  x\right)  e_{n^{\prime}}\left(  y\right)  \int%
_{\mathbb{T}^{2}}\int_{\mathbb{T}^{2}}e_{n^{\prime}}\left(  y^{\prime}\right)
e_{n}\left(  x^{\prime}\right)  H_{\phi}\left(  x^{\prime},y^{\prime}\right)
dx^{\prime}dy^{\prime}.
\]
Therefore, our aim is to prove that, given $\phi\in C^{\infty}\left(
\mathbb{T}^{2}\right)  $,
\[
\lim_{N\rightarrow\infty}\int_{B}\left\vert \left\langle \omega\otimes
\omega,\left(  H_{\phi}\right)  _{N}-H_{\phi}\right\rangle \right\vert
\mu\left(  d\omega\right)  =0.
\]
Thanks to Lemma \ref{lemma estimates} and Theorem \ref{Thm Cauchy}, with a
simple argument on Cauchy sequences one can see that it is sufficient to prove
that $\left(  H_{\phi}\right)  _{N}\rightarrow H_{\phi}$ in $L^{2}\left(
\mathbb{T}^{2}\times\mathbb{T}^{2}\right)  $ and
\begin{equation}
\int_{\mathbb{T}^{2}}\left(  H_{\phi}\right)  _{N}\left(  x,x\right)
dx\rightarrow0.\label{last limit property}%
\end{equation}
From the theory of Fourier series, $\left(  H_{\phi}\right)  _{N}\rightarrow
H_{\phi}$ in $L^{2}\left(  \mathbb{T}^{2}\times\mathbb{T}^{2}\right)  $. The
limit property (\ref{last limit property}) requires more work. The result is
included in the next lemma, which completes the proof that $\rho_{t}$ is a
weak solution, in the case when $\rho_{0}$ is bounded.

\begin{lemma}
i) The Dirichlet kernel (\ref{Dirichlet kernel}) has the two properties
\begin{align*}
\theta_{N}\left(  x_{1},x_{2}\right)   &  =\theta_{N}\left(  x_{2}%
,x_{1}\right) \\
\theta_{N}\left(  -x_{1},x_{2}\right)   &  =\theta_{N}\left(  x_{1}%
,x_{2}\right)  .
\end{align*}

ii) If a kernel $\theta_{N}\left(  x\right)  $, $x\in T^{2}$, has these two
properties, the kernel $W_{N}=\theta_{N}\ast\theta_{N}$ has the same properties.

iii) It follows that, for any symmetric matrix $S$,%
\[
\int_{\mathbb{T}^{2}}W_{N}\left(  x\right)  \left\langle S\frac{x}{\left\vert
x\right\vert },\frac{x^{\perp}}{\left\vert x\right\vert }\right\rangle dx=0.
\]
iv) It follows also that%
\[
\lim_{N\rightarrow\infty}\int_{\mathbb{T}^{2}}\int_{\mathbb{T}^{2}}%
W_{N}\left(  x-y\right)  H_{\phi}\left(  x,y\right)  dxdy=0.
\]
In the case when $\theta_{N}$ is the Dirichlet kernel, this property is the
limit property (\ref{last limit property}).
\end{lemma}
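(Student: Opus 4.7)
The plan is to verify (i)--(iv) in sequence, relying on the tensor structure of the Dirichlet kernel, the decomposition (\ref{decomposition H}), and torus translation invariance. The lemma bundles four essentially independent verifications, each short in itself.

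For (i), the Dirichlet kernel factorises as $\theta_N(x_1,x_2) = D_N(x_1) D_N(x_2)$ with $D_N(t) = \sum_{|n|\le N} e^{2\pi int}$, and the one-dimensional Dirichlet kernel is even; both symmetries are immediate consequences. For (ii), I would apply the changes of variables $y\mapsto(y_2,y_1)$ and $y\mapsto(-y_1,y_2)$ inside $W_N(x)=\int \theta_N(y)\theta_N(x-y)\,dy$, then invoke the corresponding symmetries of $\theta_N$ twice, to recover $W_N$ evaluated at the transformed $x$.

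For (iii), for a symmetric $S=\begin{pmatrix}a & b\\b & c\end{pmatrix}$ a direct expansion gives
\[
\Big\langle S\frac{x}{|x|},\frac{x^{\perp}}{|x|}\Big\rangle = (c-a)\frac{x_1 x_2}{|x|^2} + b\,\frac{x_1^2 - x_2^2}{|x|^2}.
\]
The first summand is odd in $x_1$ while $W_N$ is $x_1$-even by (ii), so its integral vanishes; the second is antisymmetric under the swap $x_1\leftrightarrow x_2$ while $W_N$ is swap-invariant, so its integral vanishes too.

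For (iv), use (\ref{decomposition H}) to write $H_\phi = H_\phi^{\mathrm{sing}}+R_\phi$ with $|R_\phi(x,y)|\le C|x-y|$, substitute $z=x-y$ for fixed $x$, and exchange the order of integration:
\[
\int\!\!\int W_N(x-y) H_\phi(x,y)\,dy\,dx = \int W_N(z)\Big[\int H_\phi(x,x-z)\,dx\Big] dz.
\]
For the singular contribution, (iii) applied pointwise with $S=D^2\phi(x)$ (integrating first in $z$) gives $\int W_N(z)\langle D^2\phi(x)\tfrac{z}{|z|},\tfrac{z^{\perp}}{|z|}\rangle\,dz = 0$ for each $x$. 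For the Lipschitz remainder, torus translation invariance yields $\int_{\mathbb{T}^2} H_\phi(x,x-z)\,dx = \tfrac{1}{2} K(z)\cdot\int[\nabla\phi(x)-\nabla\phi(x-z)]\,dx = 0$ because $\int\nabla\phi=0$ on $\mathbb{T}^2$; subtracting off the singular piece in $x$ (which vanishes since $\int D^2\phi = 0$) gives $\int R_\phi(x,x-z)\,dx=0$ as well. Hence the full integral is identically $0$ for every $N$, and the limit (\ref{last limit property}) holds as an exact identity. The main subtlety is exactly in (iv): the statement is phrased as a limit, but in fact the symmetries of $W_N$ (via (iii)) on the angular singular part and the torus translation invariance of $\phi$ on the Lipschitz remainder conspire to force the integral to vanish for every $N$, so the only real care required is in the change of variables $z=x-y$ and in cleanly splitting $H_\phi$ according to (\ref{decomposition H}).
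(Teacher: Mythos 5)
Your parts (i)--(iii) follow the paper's argument essentially verbatim: (i) via the tensorization $\theta_N(x_1,x_2)=D_N(x_1)D_N(x_2)$ with $D_N$ even (the paper simply calls this obvious), (ii) by the same changes of variables in the convolution, and (iii) by the same parity/swap cancellations; your expansion $(c-a)\frac{x_1x_2}{|x|^2}+b\,\frac{x_1^2-x_2^2}{|x|^2}$ is in fact the correct one (the paper's coefficient $S_{11}+S_{22}$ in front of $x_1x_2/|x|^2$ is a harmless slip), and in any case both terms vanish under the symmetries of $W_N$. The genuine difference is in (iv). The paper splits $H_\phi$ by (\ref{decomposition H}), kills the $D^2\phi(x)$ part exactly using (iii), and disposes of the remainder by invoking $|R_\phi(x,y)|\le C|x-y|$ together with an (implicit, Dirichlet-kernel-specific) limiting property of $W_N$ --- essentially pointwise convergence at $z=0$ of the partial Fourier sums of the Lipschitz function $z\mapsto\int R_\phi(x,x-z)\,dx$. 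You instead note that after the substitution $z=x-y$ and Fubini the inner integral vanishes for every $z\neq0$: $\int_{\mathbb{T}^2}H_\phi(x,x-z)\,dx=\tfrac12 K(z)\cdot\int_{\mathbb{T}^2}\bigl(\nabla\phi(x)-\nabla\phi(x-z)\bigr)dx=0$ because $\int_{\mathbb{T}^2}\nabla\phi=0$, and similarly $\int_{\mathbb{T}^2}D^2\phi=0$ removes the singular piece, so $\int\int W_N(x-y)H_\phi(x,y)\,dx\,dy=0$ identically in $N$, for any integrable kernel $W_N$, with no Lipschitz estimate or kernel asymptotics needed. This is correct (Fubini is legitimate since $H_\phi$ is bounded and $W_N\in L^1$), it is stronger than the stated limit --- (\ref{last limit property}) holds as an exact identity --- and it renders (iii), indeed all of (i)--(ii), logically unnecessary for (iv). What the paper's route buys is that it only uses the symmetrized kernel abstractly through the decomposition (\ref{decomposition H}); yours exploits the explicit translation-invariant form $H_\phi(x,y)=\tfrac12 K(x-y)\cdot(\nabla\phi(x)-\nabla\phi(y))$, which is precisely what is available here, and it quietly repairs the one step the paper leaves vague (the convergence of the remainder term for a general symmetric kernel).
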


\begin{proof}
Property (i) is obvious. The proof of (ii) is elementary, but we give the
computations for completeness:%
\begin{align*}
W_{N}\left(  x_{1},x_{2}\right)   &  =\int_{\mathbb{T}^{2}}\theta_{N}\left(
x_{1}-y_{1},x_{2}-y_{2}\right)  \theta_{N}\left(  y_{1},y_{2}\right)
dy_{1}dy_{2}\\
&  =\int_{\mathbb{T}^{2}}\theta_{N}\left(  x_{2}-y_{2},x_{1}-y_{1}\right)
\theta_{N}\left(  y_{2},y_{1}\right)  dy_{1}dy_{2}\\
&  =W_{N}\left(  x_{2},x_{1}\right)
\end{align*}%
\begin{align*}
W_{N}\left(  -x_{1},x_{2}\right)   &  =\int_{\mathbb{T}^{2}}\theta_{N}\left(
-x_{1}-y_{1},x_{2}-y_{2}\right)  \theta_{N}\left(  y_{1},y_{2}\right)
dy_{1}dy_{2}\\
&  =\int_{\mathbb{T}^{2}}\theta_{N}\left(  x_{1}+y_{1},x_{2}-y_{2}\right)
\theta_{N}\left(  y_{1},y_{2}\right)  dy_{1}dy_{2}\\
&  =\int_{\mathbb{T}^{2}}\theta_{N}\left(  x_{1}-y_{1},x_{2}-y_{2}\right)
\theta_{N}\left(  -y_{1},y_{2}\right)  dy_{1}dy_{2}\\
&  =\int_{\mathbb{T}^{2}}\theta_{N}\left(  x_{1}-y_{1},x_{2}-y_{2}\right)
\theta_{N}\left(  y_{1},y_{2}\right)  dy_{1}dy_{2}\\
&  =W_{N}\left(  x_{1},x_{2}\right)  .
\end{align*}
Let us prove (iii). We can write%
\[
\left\langle S\frac{x}{\left\vert x\right\vert },\frac{x^{\perp}}{\left\vert
x\right\vert }\right\rangle =\left(  S_{11}+S_{22}\right)  \frac{x_{1}x_{2}%
}{\left\vert x\right\vert ^{2}}+S_{12}\frac{x_{2}^{2}-x_{1}^{2}}{\left\vert
x\right\vert ^{2}}.
\]
Let us show that the integrals corresponding to each one of the two terms
vanish. We have%
\[
\int_{\mathbb{T}^{2}}W_{N}\left(  x\right)  \frac{x_{1}x_{2}}{\left\vert
x\right\vert ^{2}}dx=\int_{-\frac{1}{2}}^{\frac{1}{2}}\int_{-\frac{1}{2}%
}^{\frac{1}{2}}W_{N}\left(  x\right)  \frac{x_{1}x_{2}}{\left\vert
x\right\vert ^{2}}dx_{1}dx_{2}%
\]
The integration in the second quadrant,
\[
\int_{0}^{\frac{1}{2}}\int_{-\frac{1}{2}}^{0}W_{N}\left(  x\right)
\frac{x_{1}x_{2}}{\left\vert x\right\vert ^{2}}dx_{1}dx_{2}%
\]
cancels with the integration in the first quadrant,%
\[
\int_{0}^{\frac{1}{2}}\int_{0}^{\frac{1}{2}}W_{N}\left(  x\right)  \frac
{x_{1}x_{2}}{\left\vert x\right\vert ^{2}}dx_{1}dx_{2}%
\]
because of property $W_{N}\left(  -x_{1},x_{2}\right)  =W_{N}\left(
x_{1},x_{2}\right)  $ (point (ii)); similarly for the integrations in the
other quadrants. So $\int_{\mathbb{T}^{2}}W_{N}\left(  x\right)  \frac
{x_{1}x_{2}}{\left\vert x\right\vert ^{2}}dx=0$. For the other integral, just
by renaming the variables we have
\[
\int_{\mathbb{T}^{2}}W_{N}\left(  x_{1},x_{2}\right)  \frac{x_{1}^{2}%
}{\left\vert x\right\vert ^{2}}dx_{1}dx_{2}=\int_{\mathbb{T}^{2}}W_{N}\left(
x_{2},x_{1}\right)  \frac{x_{2}^{2}}{\left\vert x\right\vert ^{2}}dx_{2}dx_{1}%
\]
and then, using $W_{N}\left(  x_{1},x_{2}\right)  =W_{N}\left(  x_{2}%
,x_{1}\right)  $ (point (ii))%
\[
=\int_{\mathbb{T}^{2}}W_{N}\left(  x_{1},x_{2}\right)  \frac{x_{2}^{2}%
}{\left\vert x\right\vert ^{2}}dx_{1}dx_{2}%
\]
hence $\int_{\mathbb{T}^{2}}W_{N}\left(  x\right)  \frac{x_{2}^{2}-x_{1}^{2}%
}{\left\vert x\right\vert ^{2}}dx=0$. We have proved (iii).

Finally, the limit in (iv) is a consequence of the decompositon
(\ref{decomposition H}). Indeed,
\begin{align*}
&  \int_{\mathbb{T}^{2}}\int_{\mathbb{T}^{2}}W_{N}\left(  x-y\right)
\left\langle D^{2}\phi\left(  x\right)  \frac{x-y}{\left\vert x-y\right\vert
},\frac{\left(  x-y\right)  ^{\perp}}{\left\vert x-y\right\vert }\right\rangle
dxdy\\
&  =\int_{\mathbb{T}^{2}}\left(  \int_{\mathbb{T}^{2}}W_{N}\left(  z\right)
\left\langle D^{2}\phi\left(  x\right)  \frac{z}{\left\vert z\right\vert
},\frac{z^{\perp}}{\left\vert z\right\vert }\right\rangle dz\right)  dx=0
\end{align*}
by (iii), and%
\[
\lim_{N\rightarrow\infty}\int_{\mathbb{T}^{2}}\int_{\mathbb{T}^{2}}%
W_{N}\left(  x-y\right)  R_{\phi}\left(  x,y\right)  dxdy=0
\]
because $R_{\phi}\left(  x,y\right)  $ is Lipschitz continuous with
$\left\vert R_{\phi}\left(  x,y\right)  \right\vert \leq C\left\vert
x-y\right\vert $. To complete the proof of the claims of part (iv), let us
check that, when $\theta_{N}$ is the Dirichlet kernel, the property stated in
(iv) coincides with the limit property (\ref{last limit property}). We have
\begin{align*}
\int_{\mathbb{T}^{2}}\left(  H_{\phi}\right)  _{N}\left(  x,x\right)  dx &
=\sum_{\left\vert n^{\prime}\right\vert _{\infty}\leq N}^{N}\sum_{\left\vert
n\right\vert _{\infty}\leq N}^{N}\int_{\mathbb{T}^{2}}\int_{\mathbb{T}^{2}%
}\int_{\mathbb{T}^{2}}e^{2\pi in^{\prime}\cdot\left(  x-x^{\prime}\right)
}e^{2\pi in\cdot\left(  x-y^{\prime}\right)  }H_{\phi}\left(  x^{\prime
},y^{\prime}\right)  dy^{\prime}dx^{\prime}dx\\
&  =\int_{\mathbb{T}^{2}}\int_{\mathbb{T}^{2}}\left(  \sum_{\left\vert
n^{\prime}\right\vert _{\infty}\leq N}^{N}\sum_{\left\vert n\right\vert
_{\infty}\leq N}^{N}\int_{\mathbb{T}^{2}}e^{2\pi in^{\prime}\cdot\left(
x^{\prime}-x\right)  }e^{2\pi in\cdot\left(  x-y^{\prime}\right)  }dx\right)
H_{\phi}\left(  x^{\prime},y^{\prime}\right)  dy^{\prime}dx^{\prime}\\
&  =\int_{\mathbb{T}^{2}}\int_{\mathbb{T}^{2}}W_{N}\left(  x^{\prime
}-y^{\prime}\right)  H_{\phi}\left(  x^{\prime},y^{\prime}\right)  dy^{\prime
}dx^{\prime}.
\end{align*}
The proof is complete.
\end{proof}

\subsubsection{General case: $\rho_{0}$ of class LlogL}

Assume now that $\rho_{0}$ satisfies only the assumptions of the main theorem.
By Corollory C.3 in \cite{DaPratoRoeckner}, there exists a sequence $\rho
_{0}^{n}$ of bounded continuous functions (in fact bounded smooth cylinder
functions) that converges to $\rho_{0}$ in $L^{1}\left(  B,\mu\right)  $ and
\[
C:=\sup_{n\in\mathbb{N}}\int_{B}\rho_{0}^{n}\left(  \omega\right)  \log
\rho_{0}^{n}\left(  \omega\right)  \mu\left(  d\omega\right)  <\infty.
\]
For each $n$, apply the result of the first case and construct a weak solution
$\rho_{t}^{n}$, which fulfills in particular
\[
\int_{B}\rho_{t}^{n}\left(  \omega\right)  \log\rho_{t}^{n}\left(
\omega\right)  \mu\left(  d\omega\right)  \leq\int_{B}\rho_{0}^{n}\left(
\omega\right)  \log\rho_{0}^{n}\left(  \omega\right)  \mu\left(
d\omega\right)  \leq C.
\]
From this inequality we deduce the existence of a subsequence, still denoted
for simplicity by $\rho_{t}^{n}\left(  \omega\right)  $ which converges to
some function $\rho_{t}$ weak* in $L^{1}\left(  0,T;L^{1}\left(  B,\mu\right)
\right)  $, which satisfies property (\ref{regularity rho}), and moreover,
from the duality of Orlicz spaces, such that%
\[
\int_{0}^{T}\int_{B}G\left(  t,\omega\right)  \rho_{t}^{n}\left(  \pi
_{N}\omega\right)  \mu\left(  d\omega\right)  dt\rightarrow\int_{0}^{T}%
\int_{B}G\left(  t,\omega\right)  \rho_{t}\left(  \omega\right)  \mu\left(
d\omega\right)  dt
\]
for all $G$ such that, for some $\epsilon>0$,
\begin{equation}
\sup_{t\in\left[  0,T\right]  }\int_{B}e^{\epsilon\left\vert G\left(
t,\omega\right)  \right\vert }\mu\left(  d\omega\right)  <\infty
.\label{exp integrab test function}%
\end{equation}
Due to these fact, in order to prove that $\rho_{t}$ satisfies the weak
formulation of the continuity equation, we have only to prove that
\[
\int_{0}^{T}\int_{B}\left\langle b\left(  \omega\right)  ,DF\left(
t,\omega\right)  \right\rangle \rho_{t}^{n}\left(  \omega\right)  \mu\left(
d\omega\right)  dt\rightarrow\int_{0}^{T}\int_{B}\left\langle b\left(
\omega\right)  ,DF\left(  t,\omega\right)  \right\rangle \rho_{t}\left(
\omega\right)  \mu\left(  d\omega\right)  dt.
\]
Since $G\left(  t,\omega\right)  :=\left\langle b\left(  \omega\right)
,DF\left(  t,\omega\right)  \right\rangle $ has property
(\ref{exp integrab test function}) by Theorem \ref{thm exponential est}, this
is true, and the proof is complete.

\begin{acknowledgement}
G. Da Prato and F. Flandoli are partially supported by GNAMPA from INDAM and
by MIUR, PRIN projects 2015. M. R\"{o}ckner is supported by CRC 1283 through
the DFG.
\end{acknowledgement}

\end{document}